\pgfplotsset{compat=1.16}
\definecolor{dgreen}{cmyk}{0.37,0,0.39,0.61}
\definecolor{dblue}{cmyk}{0.44,0.44,0,0.55}
\newtheorem{thm}{Theorem}[section]
\newtheorem{prop}[thm]{Proposition}
\newtheorem{lem}[thm]{Lemma}
\newtheorem*{tref}{Theorem}
\theoremstyle{remark}
\newtheorem{rem}[thm]{Remark}
\newcommand\var{\mathrm{Var}}
\newcommand\NN{\mathbb{N}}
\newcommand\RR{\mathbb{R}}
\newcommand\mc[1]{\mathcal{#1}} 
\newcommand\un{\mathds{1}}
\newcommand\ER{\mathrm{ER}}
\newcommand\Bin{\mathrm{Bin}}
\newcommand\BGW{\mathrm{BGW}}
\newcommand\cG[1][n]{\mathscr{G}_{#1}} 
\newcommand\cGt[1][n]{\tilde{\mathscr{G}}_{#1}} 
\newcommand\Lt{L_n} 
\newcommand\Deltat{\tilde{\Delta}L_n} 
\newcommand\Pt{\mathbb{P}_{t}}
\newcommand\Et{\mathbb{E}_{t}}
\newcommand\Pv[1][t]{\mathbb{P}_{v^{(-1)}(#1)}}
\newcommand\Ev[1][t]{\mathbb{E}_{v^{(-1)}(#1)}}
\renewcommand\P{\mathbb{P}}
\renewcommand\epsilon{\varepsilon}
\newcommand\E{\mathbb{E}}
\newcommand\Xmu{(uX_n)}
\newcommand\cC[1]{\mathscr{C}_{n,#1}} 
\newcommand\cm{\mathscr{C}^{max}_{n}} 
\newcommand{\enu}[1][n]{\llbracket {#1} \rrbracket}
\begin{document}
\title{The process of fluctuations of the giant component of an Erd\H{o}s-R\'enyi graph}

\author{Nathana\"{e}l Enriquez\thanks{Université Paris-Saclay, CNRS, Laboratoire de mathématiques d'Orsay, 91405, Orsay, France;
DMA, \'Ecole normale sup\'erieure, Universit\'e PSL, CNRS, 75005 Paris, France; email: \url{nathanael.enriquez@universite-paris-saclay.fr}}, Gabriel Faraud\thanks{Université Paris Nanterre, CNRS, Laboratoire MODAL'X, 92000, Nanterre, France;
email: \url{gabriel.faraud@parisnanterre.fr}}, Sophie Lemaire\thanks{Université Paris-Saclay, CNRS, Laboratoire de mathématiques d'Orsay, 91405, Orsay, France;  email: \url{sophie.lemaire@universite-paris-saclay.fr}.}}
\date{}

\maketitle
\begin{abstract} We present a detailed study of the evolution of the giant component of the Erd\H{o}s-R\'enyi graph process as the mean degree increases from 1 to infinity. It leads to the identification of the limiting process of the rescaled fluctuations of its order around its deterministic asymptotic. This process is Gaussian with an explicit covariance.
\end{abstract}
\bigskip
\noindent\textbf{Keywords.} random graph process, Erd\H{o}s-R\'enyi random graph, invariance principle.

\noindent\textbf{AMS MSC 2020.} Primary 05C80. Secondary 60F17.
\bigskip
\maketitle

\section{Introduction}

The Erd\H{o}s-R\'enyi graph $\ER(n,p_n)$  is, without any contest, the most popular random graph. The model goes back to Gilbert in \cite{Gilbert} and was first studied by Erd\H{o}s and R\'enyi, who presented the main features of a slight variation of this model in a seminal paper \cite{ErdosRenyi}. For an integer $n\geq1$ and $p_n\in (0,1)$, it is defined as a graph with $n$ vertices, where any pair of vertices are connected by an edge independently with probability $p_n$. The most striking phenomenon occurs at the scale $p_n=\frac{c}{n}$ where this model exhibits a  phase transition at $c=1$. 
Namely, with high probability,  for $c<1$, all connected components of the graph are of order $O(\ln(n))$, whereas for $c>1$, a single component is of order of magnitude $n$ and all the others are of order $O(\ln(n))$. More precisely, the largest component, commonly called the `{\it giant component}' is asymptotically of order $\rho(c) n$ where $\rho(c)$ denotes the probability that a Galton-Watson tree with Poisson reproduction distribution of parameter $c$ is infinite. The constant $\rho(c)$ is the only solution in $(0,1)$ of the equation 
\begin{equation}
\label{eqsurviv}
1-x=e^{-cx}.
\end{equation}

Few years later, the question of the fluctuations of this giant component in the case $c>1$ was raised. Different approaches have been proposed to prove that the fluctuation of the order (that is, the number of vertices) of the giant component around $\rho(c)n$ is asymptotically Gaussian with variance $\dfrac{\rho(c)(1-\rho(c))}{(1-c(1-\rho(c)))^2}n$.  The first proof was written by Stepanov \cite{Stepanov}, who adopted a combinatorial approach and sketched the proof of a local limit theorem. Later, Pittel \cite{Pittel} obtained the asymptotic normality of the giant component from the asymptotic normality of the number of components of any given tree-like shape. 
Barraez, Boucheron and de la Vega \cite{BBDLV} further established an upper bound for the rate of convergence and a bivariate result for both the order of the giant component and the number of edges by analysing an exploration process of the graph. Independently, Puhalskii in \cite{Puhalskii} also used an exploration process to study the joint distribution of three variables: the giant component, the number of excess edges and the number of connected components. The same year, Pittel and Wormald in \cite{PittelWormald} extended the study of the fluctuations of the giant component for $c=1+\epsilon_n$ with $\epsilon_n\rightarrow 0$ and $\epsilon_n n^{1/3}\rightarrow +\infty$. Using combinatorial methods, they showed a trivariate result on the fluctuation of the 2-core that implies that the asymptotic fluctuation of the giant component is Gaussian with variance $\frac{2n}{\epsilon_n}$. Later, this last result was directly proven by Bollobas and Riordan in \cite{BollobasRiordan} with the help of an exploration process. 

All the previously mentioned articles focus on the study of the Erd\H{o}s-R\'enyi graph for a {\it fixed parameter} $c$. There is however a very natural construction that makes it possible to couple the graphs $\ER(n,\frac cn)$ for different values of $c$ and  obtain a graph-valued process. Namely, we can attach to every pair $(i,j)$ of vertices a weight given by independent uniform random variables $w_{i,j}$. We consider the graph $\cG(t)$ whose edges are the pairs of vertices with weight smaller than $\frac{t}{n}$. With this construction, for any $t$, $\cG(t)$ follows the distribution of an $\ER(n,\frac tn)$ graph. There are remarkably few results concerning the dynamics of the features of the graph $\cG(t)$, seen as a stochastic process. An important reference on these questions are the works by Janson \cite{Janson90,JansonMAMS,JansonMST}. In \cite{Janson90,JansonMAMS}, Janson develops a sophisticated general method based on chaos decomposition of martingales to get functional limit theorems for random graph statistics. In \cite{JansonMST}, he uses this method to establish that the infinite family of processes, defined by the number of isolated trees of any given order, has Gaussian fluctuations with explicit asymptotic covariances. He conjectures (\cite{JansonMST}, Remark 1.3) that this approach could be used to show that the fluctuations of the order of the giant component converge to a Gaussian process. We are going to prove this result and identify the limiting process.  Our approach however does not rely on the techniques proposed by Janson; it relies on a careful analysis of the infinitesimal increment of the giant component and  leads to a stochastic differential equation that we can explicitly solve.

In order to state the precise result, let us introduce some notation. 
Throughout the article we let $\cC{x}(t)$ define the connected component of the vertex $x$ in $\cG(t)$ and  $|\cC{x}(t)|$ its order, that is its number of vertices. We set $L_n(t)=\max(|\cC{x}(t)|,\, x\in\enu)$. Among components of order $L_n(t)$, we  choose one with a fixed rule (for instance the component of the first vertex having  $L_n(t)$  vertices) and denote it $\cm(t)$.  

We deal with the fluctuation process of $L_n(t)$, namely  $X_n(t):=\frac{1}{\sqrt{n}}(L_n(t)-n\rho(t))$ where the function $\rho$ is defined by \eqref{eqsurviv}. As proven in the above references, it converges in distribution for a {\it fixed} $t>1$, to a Gaussian distribution with variance 
\begin{equation}
\label{variance}\sigma^{2}(t)=\frac{\rho(t)(1-\rho(t))}{(1-t(1-\rho(t)))^2}. 
\end{equation}
 
Our main result identifies the limit of  $X_n$ as a {\it process}. 

\begin{thm}
\label{mainthm}
Let $\{B(t),\,t\geq 0\}$ be a standard Brownian motion on $\RR$.
Set for all $t>1$,
\[u(t)=\frac{1}{1-\rho(t)}-t \text{ and } v(t)=\frac{1}{1-\rho(t)}-1. \]
Let $t_0,t_1$ be two real numbers such that $1<t_0<t_1$.\\
The process $\{X_n(t),\; t\in[t_0,t_1]\}$ converges weakly, with respect to the Skorohod topology, to the Gaussian process $\{X(t), \,t\in[t_0,t_1]\}$ defined by:
\begin{equation}
\label{eq:brownian}
\forall t\in [t_0,t_1], \quad X(t)=\frac{1}{u(t)}B(v(t)). 
\end{equation} 
\end{thm}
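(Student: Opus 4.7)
The plan is to view $L_n$ as a continuous-time pure-jump semi-martingale whose jumps occur at edge-arrival times (interpreting the weights $w_{i,j}$ as rescaled arrival times), derive a stochastic differential equation for $X_n$ by infinitesimal analysis, and then solve it explicitly. First I would observe that each time a new edge connects the current giant $\cm(t)$ to a non-giant component $\mathcal{C}$, $L_n$ jumps by $|\mathcal{C}|$ at rate $|\mathcal{C}| L_n(t)/n$; under the assumption $t_0 > 1$, merges strictly inside the non-giant part will be shown negligible via the subcritical-ER duality (all non-giant components have size $O(\log n)$ with high probability). Setting $\chi_k(t) := \sum_{\mathcal{C} \neq \cm(t)} |\mathcal{C}|^k$, the Doob--Meyer decomposition of $L_n$ will read
\[
L_n(t) = L_n(t_0) + \int_{t_0}^t \frac{L_n(s)\,\chi_2(s)}{n}\, ds + M_n(t), \qquad \langle M_n \rangle_t = \int_{t_0}^t \frac{L_n(s)\,\chi_3(s)}{n}\, ds.
\]

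\textbf{Drift and quadratic variation of $X_n$.} I would next exploit that, conditionally on $\cm(t)$, the restriction of $\cG(t)$ to its complement is an Erd\H{o}s-R\'enyi graph on $n-L_n(t)$ vertices with parameter $t/n$, subcritical with parameter $c(t) = t(1-L_n(t)/n) < 1$. Classical estimates on subcritical susceptibilities then yield $\chi_2(t)/n \approx (1-\alpha)/(1-t(1-\alpha))$ and $\chi_3(t)/n \approx (1-\alpha)/(1-t(1-\alpha))^3$ with $\alpha = L_n(t)/n$. Substituting $\alpha = \rho(t) + X_n(t)/\sqrt{n}$, Taylor-expanding, and invoking the identities $1-\rho = e^{-t\rho}$, $\rho'(t) = \rho(t)/u(t)$, and $u'(t)/u(t) = \rho(t)/(u(t)^2(1-\rho(t))^2) - 1/u(t)$, the compensator of $X_n$ will linearise to $-u'(t)/u(t)\, X_n(t)$ and the predictable quadratic variation of $M_n/\sqrt{n}$ will converge to $\int_{t_0}^\cdot v'(s)/u(s)^2\, ds$, using $v'(t) = \rho(t)/(u(t)(1-\rho(t))^2)$.

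\textbf{Convergence and explicit solution.} I would then apply the martingale functional CLT to $M_n/\sqrt{n}$: its jumps are bounded by the largest non-giant component, hence $O(\log n)/\sqrt{n}$, so the Lindeberg condition holds. Combined with the known one-dimensional Gaussian limit for $X_n(t_0)$ with variance $\sigma^2(t_0) = v(t_0)/u(t_0)^2$ and tightness via Aldous' criterion, $X_n$ converges weakly to the unique solution of the linear SDE
\[
dX(t) = -\frac{u'(t)}{u(t)} X(t)\, dt + \frac{\sqrt{v'(t)}}{u(t)}\, dW(t), \qquad X(t_0) \sim \mathcal{N}(0, \sigma^2(t_0)).
\]
Setting $Y = uX$, It\^o's formula gives $dY = \sqrt{v'}\, dW$, so $Y$ is a Gaussian martingale with $\var(Y(t)) = v(t)$; the Dambis--Dubins--Schwarz time change identifies $Y$ with $B \circ v$ in law, yielding $X(t) = B(v(t))/u(t)$.

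\textbf{Main obstacle.} The hardest step will be the first-order identification of the drift. The susceptibility $\chi_2(t)$ intrinsically fluctuates by $O(\sqrt{n})$ around its conditional mean given $L_n(t)$; after multiplying by $L_n/n$ and dividing by $\sqrt{n}$, this conditional noise is a priori of order one in the drift of $X_n$. I would need to show that this noise averages out in the time integral on $[t_0, t_1]$, either by establishing rapid decorrelation of the fluctuations of $\chi_2$ along the edge-arrival filtration, or by a joint semi-martingale analysis of the pair $(L_n, \chi_2)$ revealing a cancellation that keeps the extra drift contribution $o_{\P}(1)$ uniformly in $t$.
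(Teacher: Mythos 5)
Your plan treats $L_n$ as a continuous-time pure-jump semimartingale in the \emph{full edge-arrival filtration}, writes its Doob--Meyer decomposition in terms of the susceptibilities $\chi_2,\chi_3$ of the residual graph, and invokes a martingale functional CLT. This is genuinely different from the paper, which discretises time, works in the filtration $\mathcal{F}_n$ generated by $L_n$ \emph{alone}, and identifies the finite-dimensional limits by an extension of Norman's three-moment method. Your limiting SDE and the algebraic identities you use are correct (they reproduce the paper's drift $a(t,x)$ and diffusion $b(t)$, and the resolution via $Y=uX$, Itô and Dambis--Dubins--Schwarz is the right closed form). The obstacle you flag at the end, however, is precisely where the proposal breaks, and neither of your two remedies is obviously workable.

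In the full filtration $\chi_2(t)$ is adapted, so its fluctuations $\tilde\chi_2(t):=\chi_2(t)-\E(\chi_2(t)\mid L_n(t))$, of order $\sqrt n$, contribute a term $\rho(t)\tilde\chi_2(t)/\sqrt n$ of order one to the drift of $X_n$. This term is not a function of $X_n$, it does not vanish in the time integral (the residual graph evolves on the macroscopic time scale, so $\tilde\chi_2$ decorrelates over intervals of length $\Theta(1)$, not faster), and it is strongly correlated with the martingale $M_n$ (the jumps of $M_n$ are component sizes, which are exactly what $\chi_2$ aggregates). So ``rapid decorrelation along the edge-arrival filtration'' fails, and a joint semimartingale analysis of $(L_n,\chi_2)$ runs into a closure problem: the drift of $\chi_2$ depends on $\chi_3$, and so on. To get the clean SDE in this filtration you would have to exhibit and control the correlation between $\tilde\chi_2$ and $M_n$, which is essentially a new limit theorem.

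The paper sidesteps this entirely by a change of filtration combined with a coupling. Conditioning only on $L_n$, the compensator of $L_n$ is a conditional expectation that averages out the $\chi_2$ noise: the extra variance it injects into $X_n$ over a window of length $h$ is $O(h^2)$, negligible against the diffusive $O(h)$. To compute this conditional expectation one needs $L_n$ to be approximately Markov in its own filtration, and that is what the re-sampled graph $\cGt(t,h)$ and the Behrisch-type comparison (Lemma~\ref{lem-behrisch}) provide: given $L_n(t)$, the residual graph can be replaced by an independent $\ER(n-L_n(t),t/n)$ at the cost of an $O(n^{-10})$ total-variation error, uniformly on $[t_0,t_1]$. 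This construction is the missing ingredient in your proposal. You would also need uniform-in-$t$ versions of the large-deviation bound for $L_n$ and the ``no mesoscopic components'' estimate (Lemmas~\ref{bounden} and~\ref{boundan}); the paper establishes these from Stepanov's formula, and the uniformity is a real issue, not a routine remark.
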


Note that the limiting process is Gaussian and does not have independent increments. Its variance at time $t$ is $\sigma^2(t)$; it explodes at $1^+$ and is bounded away from 0 and $+\infty$ on the interval $[t_0,t_1]$. 

A heuristic of how the giant component evolves during an infinitesimal time interval is displayed in Section \ref{heuristique} in order to identify the limiting process $X$. The first tool to prove Theorem  \ref{mainthm} is a representation of the variation of the order of the giant component during a small time interval detailed in Section \ref{representation}.  We show in Section \ref{convergencefdd} the finite-dimensional convergence of the sequence $((uX_n)\circ v^{(-1)})_n$ towards a standard Brownian motion, based on an extension of the three-moments method of Norman \cite{norman}.
Finally, we prove its tightness in Section \ref{Tightness}, which finishes the proof of Theorem \ref{mainthm}.

\section{Heuristic} \label{heuristique}

We propose a heuristic to identify the limiting process of $X_n$ as a solution of a stochastic differential equation (SDE). Our main idea is to understand precisely what happens to the largest component during an infinitesimal step $h$. Between time $t$ and $t+h$ a random number $N_n(t)$ of edges is added to the graph. This number is distributed as a $\Bin(\frac12n(n-1),\frac{h}{n})$ random variable. 

Each edge contributes to increasing the order of the giant component between $t$ and $t+h$ if it connects one vertex of the largest component to a vertex outside of it. This event has probability
\[
\zeta_n(t):=\frac{2L_n(t)(n-L_n(t))}{n(n-1)}=2\rho(t)(1-\rho(t))+2(1-2\rho(t))\frac{X_n(t)}{\sqrt{n}}+O(\frac{X_n(t)^2}{n}).
\]

Whenever this event is fulfilled, the order of the giant component is increased by the order of the connected component of the `new' vertex. It is known that the order of such a component is approximately distributed as the total progeny of a  Bienaymé-Galton-Watson (BGW for short) tree with $\Bin(n-L_n(t), \frac tn)$ reproduction distribution. We denote by $\mu_n(t)$ this distribution. The limiting expectation of $\mu_n(t)$ is
\begin{equation}
\label{lambda}
\lambda(t):=t(1-\rho(t)).
\end{equation}

Let us note that $\lambda(t)$ verifies $-\lambda(t)e^{-\lambda(t)}=-te^{-t}$ and thus $-\lambda(t)=W_0(-te^{-t})$ where $W_0$ is the principal branch of the Lambert W-function. In particular, $\lambda(1)=1$, the function $\lambda$ is smaller than 1 and bounded away from 1 on the interval $[t_0,t_1]$.
 
Summing over all the added edges between $t$ and $t+h$ we have the following representation:
\begin{equation}\label{sumheur}
L_n(t+h)-L_n(t)=\sum_{i=1}^{ N_n(t)} \epsilon_i T_i,
\end{equation}
where $\epsilon_i$ are Bernoulli$(\zeta_n(t))$ random variables that are equal to 1 when the associated edge connects the giant component to a vertex outside of it, and the random variables $T_i$ are $\mu_n(t)$ distributed and represent the order of  components outside of $\cm(t)$. 

The expectation of \eqref{sumheur} yields
\begin{multline*}
\E(N_n)\E(\epsilon)\E(T)=\frac{nh}{2}\left(\frac{2\rho(t)(1-\rho(t))}{1-\lambda(t)}+2\left(\frac{1-2\rho(t)}{1-\lambda(t)}
-\frac{\rho(t)(1-\rho(t))t}{(1-\lambda(t))^2}\right)\frac{X_n(t)}{\sqrt{n}}\right)\\
+O\left(ht^2 X_n(t)^{2}\right).
\end{multline*}

Note that the first order term $nh\frac{\rho(t)(1-\rho(t))}{1-\lambda(t)}$ coincides with $nh\rho'(t)$ and therefore will compensate the increment of $n\rho(t)$ in the variation of $L_n(t)-n\rho(t)=\sqrt{n}X_n(t)$. The second term $\sqrt{n}h\left(\frac{1-2\rho(t)}{1-\lambda(t)}
-\frac{\rho(t)(1-\rho(t))t}{(1-\lambda(t))^2}\right){X_n(t)}$ will give the drift term in the limiting stochastic differential equation.
\medskip

We are now interested in the fluctuations in \eqref{sumheur}. They come from the three sources of randomness $N_n(t), \epsilon_i, T_i$ which will turn out to be asymptotically independent. Informally, we can describe these fluctuations with the help of three standard Brownian motions $C^{(N)}$, $C^{(T)}$ and  $C^{(\epsilon)}$:
\begin{itemize}
\item fluctuations of $N_n(t)$: $\sqrt{n/2} \E(\epsilon T)dC^{(N)}(t)= \sqrt{2n}\frac{\rho(t)(1-\rho(t))}{1-\lambda(t)}dC^{(N)}(t)$, 
\item fluctuations due to $\epsilon_i$: 
\[\sqrt{\var(\epsilon)\E(N_n(t))\E(T)^2}dC^{(\epsilon)}(t)=\sqrt{n\rho(t)(1-\rho(t))\frac{1-2\rho(t)(1-\rho(t))}{(1-\lambda(t))^2}}dC^{(\epsilon)}(t),\]
\item fluctuations due to $T_i$: \[\sqrt{\E(N_n(t)\epsilon)\var(T)}dC^{(T)}(t)=\sqrt{n\rho(t)(1-\rho(t))\frac{\lambda(t)}{(1-\lambda(t))^3}}dC^{(T)}(t).\]
\end{itemize}
Therefore, the limiting equation for $(X_n(t))_t$ is: 
\begin{align*}
dX(t)=&\left(\frac{1-2\rho(t)}{1-\lambda(t)} -\frac{\rho(t)(1-\rho(t))t}{(1-\lambda(t))^2}\right)X(t) dt + \sqrt{2}\frac{\rho(t)(1-\rho(t))}{1-\lambda(t)}dC^{(N)}(t) \\ &+\sqrt{\rho(t)(1-\rho(t))\frac{1-2\rho(t)(1-\rho(t))}{(1-\lambda(t))^2}}dC^{(\epsilon)}(t)
+ \sqrt{\frac{\rho(t)(1-\rho(t))\lambda(t)}{(1-\lambda(t))^3}}dC^{(T)}(t).
\end{align*}

Assuming the independence of the three above Brownian motions, we can regroup them under a single Brownian motion $C$ and get 
\begin{equation}\label{SDE}
dX(t)= \left( \frac{1-2\rho(t)}{1-\lambda(t)} -\frac{\rho(t)(1-\rho(t))t}{(1-\lambda(t))^2} \right)    X(t) dt +\sqrt{\frac{\rho(t)(1-\rho(t))}{(1-\lambda(t))^{3}}}dC(t).
\end{equation}

Solving this equation gives
\begin{equation}
\label{Xlimit}
X(t)=\frac{1-\rho(t)}{1-\lambda(t)}B\left(\frac{\rho(t)}{1-\rho(t)}\right),
\end{equation}
where $B$ is a standard Brownian motion.

\section{A representation of the infinitesimal evolution of the giant component \label{representation}}

Let $h>0$. The goal of this section is to give a representation of the variation of $L_n(t)$ between $t$ and $t+h$, in order to treat this variation as a sum of i.i.d. random variables, up to a tractable error.

First note that, conditional on the situation at time $t$, $L_n(t+h)$ only depends on the edges added between $t$ and $t+h$, and not on the order in which we add them. Therefore, we can add them in a strategic order. Call $\cG^{e}(t)$ the subgraph induced in $\cG(t)$ by vertices outside of $\cm(t)$. 
We partition the edges added between $t$ and $t+h$ in three categories:
\begin{itemize}
 \item the edges which connect vertices of $\cm(t)$, they do not affect $L_n(t)$ and can be ignored;
\item the edges that connect vertices of $\cG^{e}(t)$;

\item  the edges that connect $\cm(t)$ to $\cG^{e}(t)$.
\end{itemize}

Note that there is not a complete independence between the connections in $\cm(t)$ and $\cG^{e}(t)$, since the orders of the connected components of $\cG^{e}(t)$ have to be smaller than $|\cm(t)|$. To overcome this difficulty, we introduce two graphs, denoted $\cGt(t)$ and $\cGt(t,h)$, for which the pair of vertices outside the giant component are connected independently with probability respectively $\frac{t}{n}$ and $\frac{t+h}{n}$. 

First, we construct $\cGt(t)$ on $\enu$, having $\cm(t)$ as a connected component in the following way:

\begin{itemize}
\item every edge between vertices inside $\cm(t)$
is present in $\cGt(t)$ if and only if it is present in $\cG(t)$;
\item $\cGt(t)$ has no edge between a vertex in $\cm(t)$ and another outside;
\item an edge between two vertices outside $\cm(t)$, is added with probability $\frac{t}{n}$ independently from every other edge. We call them \textit{external} edges.
\end{itemize}

Secondly, the graph $\cGt(t,h)$ is defined from $\cGt(t)$ by adding independently  with probability $\frac{\frac{h}{n}}{1-\frac{t}{n}}=\frac{h}{n-t}$ all the edges that are not in $\cGt(t)$. The steps of the construction of $\cGt(t,h)$ from $\cG(t)$ are illustrated in Figures~\ref{graph_g}, \ref{graph_gt} and \ref{graph_gth}. 
Let us call a {\it bridge}, an edge in $\cGt(t,h)$ between a vertex in $\cm(t)$ and another one outside of $\cm(t)$. Let $Y_n$ denote their number. Let $\cGt^{e}(t,h)$ be the subgraph induced in $\cGt(t,h)$ by vertices which are not in $\cm(t)$.    It is obvious from the construction that $\cGt^{e}(t,h)$ is distributed as an  $\ER(n-L_n(t),\frac{t+h}{n})$-random graph and that  $Y_n$ is a $\Bin(L_n(t)(n-L_n(t)), \frac{h}{n-t})$ random variable. 

Let $\Lt(t,h)$ be the order of the component of $\cGt(t,h)$ containing $\cm(t)$. We will see that $\Lt(t,h)$ behaves similarly to $\Lt(t+h)$.

An analog construction was made in \cite{Behrisch}, in the setting of random uniform hypergraphs in order to prove that the order of the giant component follows a local limit theorem at some {\it fixed} time. 
We are going to use a modified version of Lemma 10 of \cite{Behrisch}, namely dealing with estimations that are valid uniformly on a time interval. This will make it possible to reduce the study of $\Delta L_n(t):=L_n(t+h)-L_n(t)$ to the study of $\Deltat (t):=\Lt(t,h)-L_n(t)$, on the event
\[\mathcal{E}_n(t):= \{|L_n(t)-n\rho(t)|\leq n^{0.7}\}=\{|X_n(t)|\leq n^{0.2}\}.\]

We state it in our framework:
\begin{lem}[Lemma 10 of \cite{Behrisch}]\label{lem-behrisch}
There is a number $n_0>0$ and a constant $C_1$ such that, for $t\in[t_0,t_1],$ for all integers $n>n_0$, all integers $n_1$ such that $|n_1-\rho(t)n|<n^{0.7},$ and all integers $\nu$ we have
\[|\P(\Delta L_n(t)=\nu|L_n(t)=n_1)-\P(\Deltat(t)=\nu|L_n(t)=n_1)|\le C_1 n^{-10}.\]
As a consequence,
$|\E[(\Delta L_n(t))^k\mid L_n(t)=n_1]-\E[(\Deltat(t))^k|L_n(t)=n_1]|\leq C_1 n^{k-9}$ for $k\in\{1,2,3\}$.
\end{lem}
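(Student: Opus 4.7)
My plan is to couple $\cG(t+h)$ and $\cGt(t,h)$ conditional on $\cm(t)=C$ with $|C|=n_1$, so that they coincide outside an event of probability $e^{-\Theta(n)}=O(n^{-10})$. The internal edges of $C$ play no role in $\Delta L_n(t)$ or $\Deltat(t)$, so we ignore them. For each pair $\{i,j\}$ of vertices outside $C$, draw an independent uniform $U_{ij}\in[0,1]$ and put an edge in $\cGt(t,h)$ iff $U_{ij}<(t+h)/n$: this exactly realizes the required distribution (the external part of $\cGt(t)$ is $\ER(n-n_1,t/n)$, thinned by $h/(n-t)$). For $\cG(t+h)$, use the same $U_{ij}$ provided the induced graph with edges $\{U_{ij}<t/n\}$ has no component of size $>n_1$; otherwise resample the external edges from their true conditional law. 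For the $n_1(n-n_1)$ potential bridges, both processes produce iid Bernoulli$(h/(n-t))$ edges (no bridge is present at time $t$, by definition of $\cm(t)$), so we couple them identically.

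Set $\mathcal{B}:=\{\ER(n-n_1,t/n)\text{ has a component of size }>n_1\}$. On $\mathcal{B}^c$ the coupling forces $\cG(t+h)=\cGt(t,h)$; it then remains to bound $\P(\mathcal{B})$ and to check that, on the good event, the largest component of $\cGt(t,h)$ is the one containing $\cm(t)$. Since $\lambda(t)=t(1-\rho(t))<1$ is bounded away from $1$ uniformly on $[t_0,t_1]$ and $|n_1-\rho(t)n|\leq n^{0.7}$, the external graph is uniformly subcritical: standard $\BGW$ domination bounds the component of any fixed outside vertex by the total progeny of a $\BGW$ tree with $\Bin(n-n_1-1,t/n)$ offspring, whose mean stays below some $\lambda^{*}<1$ for $n$ large. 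The classical subcritical tail $\P(|T|\geq k)\leq e^{-ck}$ with $c=c(\lambda^{*})>0$, together with a union bound over the $n-n_1$ outside vertices, yields $\P(\mathcal{B})\leq n\,e^{-cn_1}=e^{-\Theta(n)}$. The same argument applied to $\cGt^{e}(t,h)\sim\ER(n-n_1,(t+h)/n)$, still subcritical whenever $h<u(t)=\frac{1}{1-\rho(t)}-t$ (the regime relevant to Theorem~\ref{mainthm}), shows that its largest component is $O(\log n)\ll n_1$ outside an event of equally small probability, so the component of $\cGt(t,h)$ containing $\cm(t)$ is indeed the largest, and $\Delta L_n(t)=\Deltat(t)$ on the intersection of these good events.

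The first assertion of the lemma then follows: $|\P(\Delta L_n(t)=\nu\mid L_n(t)=n_1)-\P(\Deltat(t)=\nu\mid L_n(t)=n_1)|$ is dominated by the probability that the coupling disagrees, which is $O(n^{-10})$. The moment bound follows from the trivial estimate $|\Delta L_n(t)|,|\Deltat(t)|\leq n$: the two variables agree outside an event of probability $O(n^{-10})$, so
\[
|\E[(\Delta L_n(t))^k]-\E[(\Deltat(t))^k]|\leq 2n^k\cdot O(n^{-10})=O(n^{k-10})\leq C_1\,n^{k-9}.
\]

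The main obstacle will be to make the subcritical component tail estimates fully uniform in $t\in[t_0,t_1]$ and in the $n^{0.7}$-window of admissible $n_1$ around $\rho(t)n$, and to confirm that the threshold $h<u(t)$ is automatic in the use made of the lemma. Once this uniform $\BGW$ domination is in hand, the rest is a routine coupling argument.
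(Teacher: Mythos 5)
The paper does not actually reprove this lemma: it cites Lemma~10 of Behrisch--Coja-Oghlan--Kang and only supplies the new, uniform-in-$t$ ingredient (Lemma~\ref{boundan}, a bound on external components of order $>\ln^2 n$), so a direct comparison is impossible. Your coupling is a self-contained and genuinely different route. It is cruder: you resample on the event $\mathcal{B}$ that the unconditioned external graph has a component of linear size $>n_1\approx\rho(t)n$, whereas Behrisch's argument (per the paper's remark) hinges on the much finer $\ln^2 n$ threshold. Crudeness is a feature here: your bad events have probability $e^{-\Theta(n)}\ll n^{-10}$, you avoid the more delicate bookkeeping, and the moment bound drops out of the coupling for free. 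The approach is sound.

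A few points would need to be tightened before this is a proof. First, the conditional law of $\mathscr{G}^e(t)$ given $\cm(t)=C$ is $\ER(n-n_1,t/n)$ conditioned not on $\mathcal{B}^c$ but on the slightly smaller event $E\subset\mathcal{B}^c$ that $C$ also wins the tie-break (no external component of order exactly $n_1$ pre-empts it under the fixed rule); you should resample precisely when $G_1\notin E$, not when $G_1\in\mathcal{B}$. Since $\mathcal{B}^c\setminus E$ is contained in $\{\exists$ external component of order $\geq n_1\}$, the same exponential tail controls it, so nothing breaks, but as written the marginal of $\mathscr{G}^e(t)$ is off by $e^{-\Theta(n)}$. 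Second, the ``good event'' should be stated as the intersection of $E$ with a \emph{separate} event that all components of $\cGt^e(t,h)\sim\ER(n-n_1,(t+h)/n)$ have order $\leq n_1$; only the latter guarantees that the component of $\cm(t)$ is the largest one in $\cGt(t,h)$, hence that $\Delta L_n(t)=\Deltat(t)$ on the coupled realisation. You mention this in passing but it should be made an explicit event with its own bound. Third, the requirement that $\cGt^e(t,h)$ be uniformly subcritical, i.e.\ $(1-n_1/n)(t+h)\leq\lambda(t_1)+o(1)<1$, does force $h$ small (and, as you note, $h<u(t)$); this is harmless because every application in the paper takes $h\leq n^{-1}$, but since the lemma statement does not quantify $h$, the restriction should be made explicit. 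Finally, the uniform BGW tail $\P(T\geq k)\leq e^{-ck}$ with $c>0$ depending only on $\sup_{[t_0,t_1]}\lambda<1$ and on the $n^{0.7}$-window needs to be written out; it is elementary but it is exactly the place where uniformity in $t$ enters, which is the whole point of the lemma's restatement in this paper.
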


The only difference between this statement and the one presented in \cite{Behrisch} is the fact that the bound is uniform in $t\in[t_0,t_1]$.
To get this statement, the proof in \cite{Behrisch} is still valid; the only missing ingredient is a uniform (in $t$) upper bound for the probability of existence of components of order larger than $(\ln n)^2$ outside of the giant one. We show this bound in  Appendix \ref{rareevent} (Lemma \ref{boundan}).

For a fixed time $t>1$, it is known that the complement of $\mc{E}_n(t)$ denoted by $\mc{E}^{c}_n(t)$ appears rarely: $\P(\mathcal{E}^{c}_n(t))=O(\exp(-Cn^{0.4}))$ (see for example Puhalskii \cite{Puhalskii}, Theorem 2.3). However  we need a bound of $\P(\mathcal{E}^{c}_n(t))$ that is uniform in $t\in[t_0,t_1]$. To the best of our knowledge such a uniform bound has never been shown so we shall prove in Lemma \ref{bounden} the following uniform estimate:
\begin{equation}
\label{complen}
\text{Uniformly on } t\in[t_0,t_1],\quad 
\P(\mathcal{E}^{c}_n(t))=O(\exp(-Cn^{0.4})).
\end{equation}
The proof relies on some results by Stepanov~\cite{Stepanov}. \medskip

To study $\Deltat(t)$, we give a description of the edges that contribute to it.
We call $x_1, x_2,\ldots, x_{Y_n}$ the sequence (in an arbitrary order) of the endpoints of the bridges in $\cGt(t,h)$ which are outside of $\cm(t)$. We denote by  $C_{x_1}, \ldots, C_{x_{Y_n}}$ their connected components in $\cGt^{e}(t,h)$.

We have to take into account the unlikely but possible event that two bridges point to the same component of $\cGt^{e}(t,h)$. We introduce the set \[\mathcal{R}:=\{2\le j\le Y_n \text{ s.t. } C_{x_j}=C_{x_i} \text{ for some } i<j \}\]
and define
\begin{equation}
C^{\star}_{x_i}:=\begin{cases}
\emptyset &\text{ if } i\in\mathcal{R}\\
C_{x_i}& \text{ otherwise}.
\end{cases}
\end{equation}
(for the graph depicted in Figure~\ref{graph_gth}, $\mathcal{R}=\{3\}$)

The variables $|C_{x_i}|$ are classically compared to the total progeny of a Bienyaimé-Galton-Watson process with a binomial reproduction distribution. The purpose of the following lemma is to use this comparison in order to give properties of $\Deltat(t)$ that will lead to upper and lower bounds of the moments of  $\Deltat(t)$ in the next section.

\begin{lem}\label{encadrement}
Let $\BGW(n,p)$ denote a Bienaymé-Galton-Watson process with $\Bin(n,p)$ reproduction distribution. With the previously introduced notation, 
\[
\Deltat(t)=\sum_{\substack{i=1,\\ i\not\in \mathcal{R}}}^{Y_n}|C_{x_i}|=\sum_{i=1}^{Y_n} |C^{\star}_{x_i}|.
\]
\begin{enumerate}
\item 
There exists a sequence $(M_i)_{i\in\NN^*}$ of i.i.d. random variables independent of $Y_n$ such that
\begin{enumerate}
\item \label{encadrement:va}
 $\forall 1\leq j\leq Y_n, \quad \sum_{i=1}^j|C_{x_i}^{\star}|\leq \sum_{i=1}^j M_i$,
 \item \label{encadrement:M} the distribution of $M_i$ conditional on $\Lt(t)$ is the same as the total progeny of a $\BGW(n-\Lt(t),\frac{t+h}{n})$;
\end{enumerate} 
we write accordingly
\[\Deltat(t)\leq\min\left(\sum_{i=1}^{Y_n}M_i,n\right).\]
\item
Moreover, 
\begin{enumerate}
\item \label{encadrement:Y} the conditional distribution of $Y_n$ given $\Lt(t)$ is $\Bin(\Lt(t)(n-\Lt(t)),\frac{h}{n-t})$;
\item \label{encadrement:C} the conditional distribution of $|C_{x_i}|$ given $(Y_n,\Lt(t))$ dominates the conditional distribution of the minimum of the total progeny of ${\BGW(n-\Lt(t)-\ln^2(n),\frac{t+h}{n})}$ and $\ln^2(n)$;
\item \label{encadrement:A} Let $\mathcal{A}_n(t):=\{\text{no component of }\cGt^{e}(t,h) \text{ has size greater than } \ln^2(n)\}.$ \par
\noindent The conditional distribution of $|\mathcal{R}|\un_{\mathcal{A}_n(t)}$ 
given $(Y_n,\Lt(t))$ is dominated by the binomial distribution $\Bin\left(Y_n,\frac{Y_n\ln^2(n)}{n-\Lt(t)}\right)$. 
\end{enumerate}
\end{enumerate}
\end{lem}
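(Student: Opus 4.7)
I would treat the four items sequentially: the identity at the head of the statement and part (2a) are read off from the construction, while parts (1), (2b), and (2c) all rely on the standard BFS‑versus‑BGW comparison adapted to our sequential setting.

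For the identity, the connected component of $\cGt(t,h)$ containing $\cm(t)$ equals $\cm(t)$ together with every component of $\cGt^{e}(t,h)$ that is joined to $\cm(t)$ by at least one bridge. These components are exactly the distinct $C_{x_i}$, $i\in\enu[Y_n]$, and $\mathcal{R}$ was defined so that $\enu[Y_n]\setminus\mathcal{R}$ lists each such component exactly once, whence both equalities. Part (2a) is then immediate from the construction of $\cGt(t,h)$: conditionally on $\Lt(t)$, each of the $\Lt(t)(n-\Lt(t))$ potential bridges is included independently with probability $h/(n-t)$.

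For part (1), I would generate once and for all, independently of everything else, i.i.d.\ copies of a $\BGW(n-\Lt(t),\frac{t+h}{n})$ process with total progenies $M_1,M_2,\ldots$, and then explore $C_{x_1},\ldots,C_{x_{Y_n}}$ sequentially via BFS in $\cGt^{e}(t,h)$. At each BFS step inside the exploration of $C_{x_i}$, the number of new children of the current vertex is $\Bin(r,(t+h)/n)$ with $r\le n-\Lt(t)$, so the usual coupling between binomials with the same success probability matches this step with one generation of the $i$-th BGW and yields $|C_{x_i}|\le M_i$; since $|C^{\star}_{x_i}|\le|C_{x_i}|$, summing gives the partial‑sum domination in (a), while independence of $Y_n$ comes from the $(M_i)$ being built from fresh randomness disjoint from the bridge randomness. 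For (2b) I would run the same BFS but stop at $\ln^2(n)$ revealed vertices: while fewer than $\ln^2(n)$ vertices have been discovered in the current exploration, the pool of unexplored outside vertices has size at least $n-\Lt(t)-\ln^2(n)$, so the coupling now runs the other way and $|C_{x_i}|\wedge\ln^2(n)$ stochastically dominates the $\ln^2(n)$-truncation of the total progeny of a $\BGW(n-\Lt(t)-\ln^2(n),(t+h)/n)$.

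For (2c), on $\mathcal{A}_n(t)$ the union $\bigcup_{j<i}C_{x_j}$ contains at most $Y_n\ln^2(n)$ vertices; conditionally on $(\Lt(t),Y_n)$ and on the bridges of index $<i$, the outside endpoint $x_i$ is uniform over the $n-\Lt(t)$ vertices outside $\cm(t)$, so the probability that $i\in\mathcal{R}$ is bounded by $Y_n\ln^2(n)/(n-\Lt(t))$, and summing these Bernoulli indicators gives the claimed binomial domination. The main obstacle I anticipate is that the $|C_{x_i}|$ are not independent — they share the random graph $\cGt^{e}(t,h)$ — so producing i.i.d.\ dominators $M_i$ that are also independent of $Y_n$ requires cleanly separating three sources of randomness (the bridges, the edges of $\cGt^{e}(t,h)$ revealed by BFS, and the fresh BGW randomness) and realising the coupling step‑by‑step inside the exploration; the binomial couplings themselves are routine, and only this bookkeeping needs care.
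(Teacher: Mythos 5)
Your proposal is correct in spirit but takes a genuinely different route from the paper for part~(1). The paper does \emph{not} attempt a direct coupling of the BFS exploration with externally generated BGW processes; instead it introduces an intermediate object: a sequence of graphs $\cGt^{e,i}(t,h)$ obtained from $\cGt^{e}(t,h)$ by \emph{redrawing} (resampling with probability $\frac{t+h}{n}$) all edges incident to the already-explored cluster $\bigcup_{l\leq i}\tilde{C}_{x_l}$. This produces components $\tilde{C}_{x_i}$ that are genuinely i.i.d.\ (each explored in a fresh $\ER(n-\Lt(t),\frac{t+h}{n})$ graph), satisfy $\bigcup_{i\leq j}C^{\star}_{x_i}\subset\bigcup_{i\leq j}\tilde{C}_{x_i}$, and are each stochastically dominated by the total progeny of the BGW via the cited inequality~\eqref{vdhmajcomp}. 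The existence of i.i.d.\ dominators $M_i$ independent of $Y_n$ is then a clean consequence of monotone coupling applied to an already-i.i.d.\ sequence. Your plan gets the same domination by running a step-by-step BFS/BGW coupling directly in $\cGt^{e}(t,h)$; that is a legitimate alternative, but the ``bookkeeping'' you flag as the main obstacle is precisely what the paper's redrawing device is designed to eliminate — it trades your filtration argument (that the auxiliary BGWs are i.i.d.\ and independent of $Y_n$ despite being built from a shared graph) for an explicit construction in which independence is automatic. For part~(2b) the paper simply invokes inequality~\eqref{vdhmincomp} together with monotonicity in the population size, rather than re-deriving the lower coupling by BFS as you propose; parts~(2a) and~(2c) are argued as you do.

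Two small inaccuracies in your write-up are worth flagging. First, the phrase ``generate once and for all, independently of everything else'' is incompatible with a pathwise coupling $|C_{x_i}|\leq M_i$: the $M_i$ must be built jointly with the exploration (graph edges plus fresh Bernoulli top-ups), and only their \emph{joint law} — i.i.d.\ with the stated conditional distribution and independent of $Y_n$ — is prescribed. Second, the chain $|C^{\star}_{x_i}|\leq|C_{x_i}|\leq M_i$ cannot hold for all $i$: when $i\in\mathcal{R}$ the cluster $C_{x_i}$ coincides with an already-explored (and possibly large) component, and no coupling forces $|C_{x_i}|\leq M_i$. What one actually shows is $|C^{\star}_{x_i}|\leq M_i$ for each $i$ — trivially for $i\in\mathcal{R}$ since $C^{\star}_{x_i}=\emptyset$, and by the BFS coupling on the fresh component otherwise. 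This is also exactly what the paper's $\tilde{C}_{x_i}$ construction delivers (with $\tilde{C}_{x_i}=\emptyset$ when $x_i$ lies in an already-covered $\tilde{C}_{x_l}$).
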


\begin{rem}\
\begin{itemize}
\item The general idea behind this lemma is that, up to negligible effects, $\Deltat(t)$ is close to a sum of $Y_n$ i.i.d. random variables $M_i$. The difference between $\Deltat(t)$ and $\sum_{i=1}^{Y_n}M_i$ comes from two sources: the indices in $\mathcal{R}$ and the  difference between $M_i$ and $|{C}_{x_i}|$. Using the last two points of the lemma, we will see in the next section that this difference is very small.

\item 
We will show that, uniformly on $t\in[t_0,t_1]$,
 on the event $\mathcal{E}_n(t)$,
\[\P(\mathcal{A}^{c}_n(t)\mid \sigma(X_n(s), s\leq t))=O(n^{-10}).\]
This is a consequence of usual bounds on the order of the components of a subcritical Erd\H{o}s-R\'enyi graph. A detailed proof is given in Appendix \ref{rareevent}.
\end{itemize}
\end{rem}

Before starting the proof of Lemma \ref{encadrement}, let us recall some classical inequalities between the order of a component and the total progeny of a Binomial BGW process, derived from the exploration of a component in a breadth-first order.
 \begin{tref}[Theorems 4.2 and 4.3 in \cite{vdh}]
 \label{propvdh} 
 For a positive integer $n$ and a real $p\in]0,1[$, let $L_{n,p}$ denote the order of the component of a vertex in $\ER(n,p)$ and 
  let $T_{n,p}$ denote the total progeny of a $\BGW(n,p)$. Then for every $k\in\{0,1,\ldots,n\}$,
  \begin{eqnarray}
   &&  \P(L_{n,p}\geq k)\leq \P(T_{n,p}\geq k),\label{vdhmajcomp}\\
   &&  \P(L_{n,p}\geq k)\geq \P(T_{n-k,p}\geq k). \label{vdhmincomp}
 \end{eqnarray}
 \end{tref}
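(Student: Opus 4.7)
The plan is to run a breadth-first search (BFS) exploration of the connected component of a fixed vertex $v$ in $\ER(n,p)$ and couple it with a Bienaymé-Galton-Watson process of binomial offspring: for the upper bound, by adding phantom offspring at every step; for the lower bound, by thinning the revealed offspring.

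More precisely, I would maintain three disjoint sets $E_i$, $A_i$, $U_i$ of explored, active and untouched vertices, initialised with $E_0=\emptyset$, $A_0=\{v\}$, $U_0=\enu\setminus\{v\}$. At step $i\geq 1$, pick any $w_i\in A_{i-1}$, move it from $A$ to $E$, and reveal each edge $\{w_i,u\}$ with $u\in U_{i-1}$ independently with probability $p$; the revealed neighbours move from $U$ to $A$. Then $L_{n,p}=\min\{i\geq 0\,:\,A_i=\emptyset\}$ and, conditionally on $|U_{i-1}|$, the number $X_i$ of children of $w_i$ is $\Bin(|U_{i-1}|,p)$ with $|U_{i-1}|\leq n-1$.

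For \eqref{vdhmajcomp}, I would complete each $X_i$ with an independent $\Bin(n-|U_{i-1}|,p)$ number of \emph{phantom} offspring, yielding offspring counts $Y_i\sim\Bin(n,p)$ that are i.i.d.\ and independent of the past of the exploration. The tree obtained by joining the BFS with these phantom descendants is then a $\BGW(n,p)$ tree whose total progeny $T_{n,p}$ dominates $L_{n,p}$ pointwise, proving $\P(L_{n,p}\geq k)\leq\P(T_{n,p}\geq k)$.

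For \eqref{vdhmincomp}, I would run a parallel $\BGW(n-k,p)$ tree by thinning in the opposite direction: at each step $i$, I fix an a priori subfamily of $n-k$ among the $|U_{i-1}|$ potential edges from $w_i$ and let $Z_i\sim\Bin(n-k,p)$ count the revealed edges within this subfamily. This is feasible as long as $|U_{i-1}|\geq n-k$, i.e.\ as long as the BFS has discovered strictly fewer than $k$ vertices. The only real subtlety is to close the loop on this restriction, which I would do by the following balance argument: on the event $\{L_{n,p}<k\}$, the BFS ends having discovered fewer than $k$ vertices, hence $|U_{i-1}|\geq n-k$ throughout the exploration, the thinning is licit at every step, so the $\BGW(n-k,p)$ tree is dominated step-by-step by the BFS and therefore has total progeny at most $L_{n,p}<k$. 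This gives $\{T_{n-k,p}\geq k\}\subseteq\{L_{n,p}\geq k\}$ and yields $\P(L_{n,p}\geq k)\geq\P(T_{n-k,p}\geq k)$.
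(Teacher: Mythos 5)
Your argument is correct and is precisely the breadth-first exploration coupling behind the result the paper invokes (the paper itself gives no proof here; it cites Theorems 4.2--4.3 of van der Hofstad, which are proved exactly this way). The only cosmetic point is that the offspring variables $Y_i$ and $Z_i$ must be completed by fresh independent binomials after the BFS dies (and, for $Z_i$, off the event where the thinning is licit) so that they genuinely drive a $\BGW$ process; this routine completion does not affect the pointwise domination and the event inclusion you derive.
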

\begin{proof}[Proof of Lemma \ref{encadrement}]

To prove Assertions \ref{encadrement:va} and \ref{encadrement:M}, we define recursively a sequence 
of graphs $(\cGt^{e,i}(t,h))_i$ and a sequence $(\tilde{C}_{x_i})_i$ such that $\tilde{C}_{x_i}$ is a component of $x_i$ in $\cGt^{e,i-1}(t,h)$ and \[\forall j\geq 1, \quad \bigcup_{i=1}^j C^\star_{x_i}\subset \bigcup_{i=1}^j\tilde{C}_{x_i}.\]

We do it as follows\footnote{See Figures~\ref{graph_gtone} and \ref{graph_gtfour} for a realization of the sequence $(\cGt^{e,i}(t,h))_i$ for the graph $\cGt(t,h)$ depicted in Figure~\ref{graph_gth}.}. 

First set $\tilde{C}_{x_1}=C^{\star}_{x_1}$. Then call $\cGt^{e,1}(t,h)$ the graph obtained from $\cGt^{e}(t,h)$ by redrawing every edge containing a vertex of $C^{\star}_{x_1}$. More precisely,  we remove all those edges and then add each pair of vertices containing an element of $C^{\star}_{x_1}$ with probability $\frac{t+h}{n}$ independently of anything else. Note that $\cGt^{e,1}(t,h)$ is independent of $C^{\star}_{x_1}$.

We now call $\tilde{C}_{x_2}$ the component of $x_2$ in $\cGt^{e,1}(t,h)$. It follows from the construction that $\tilde{C}_{x_1}$ and $\tilde{C}_{x_2}$ are independent and have the same distribution. 

We also note that 
\[C^\star_{x_2}\subset \tilde{C}_{x_2}. \]
Indeed, either $C^{\star}_{x_2}=\emptyset$ and the statement is trivial or $x_2$ and $x_1$ are not in the same component in $\cGt^{e}(t,h)$, which means that
when redrawing the edges we did not change edges between vertices in $C_{x_2}$. It is however possible that by redrawing the edges, we add edges connecting for instance $C_{x_2}$ to $C_{x_1}$ in $\cGt^{e,1}(t,h)$. Observe that it may occur that this redrawing creates also paths  between $C_{x_2}$ and $C_{x_i}$ for some $i\geq3$,  via some intermediate vertex belonging to $C_{x_1}$ (see Figure~\ref{graph_gtone}).

For $i\geq2$, 

\begin{itemize}
    \item if $x_{i+1}\in \tilde{C}_{x_l}$ for some $l\leq i$, then $C^{\star}_{x_{i+1}}\subset \tilde{C}_{x_l}$. We set $\tilde{C}_{x_{i+1}}:=\emptyset$ and define $\cGt^{e,i}(t,h)$ as $\cGt^{e,i-1}(t,h)$.
    \item if $x_{i+1}\notin \cup_{l=1}^{i}\tilde{C}_{x_l}$, then 
    $C^{\star}_{x_{i+1}}$ is disjoint of $\cup_{l=1}^{i}\tilde{C}_{x_l}$. We  redraw the edges having one extreme point in $\cup_{l=1}^{i}\tilde{C}_{x_l}$. This defines a new graph denoted by $\cGt^{e,i}(t,h)$. This graph is again distributed as an $\ER (n-L_n(t), \frac{t+h}{n})$ and is independent of $\cup_{l=1}^{i}\tilde{C}_{x_l}$.
    We now call $\tilde{C}_{x_{i+1}}$ the component of $x_{i+1}$ in $\cGt^{e,i}(t,h)$.
\end{itemize}

\definecolor{orange}{rgb}{1.,0.6,0.} 
\definecolor{rouge}{rgb}{0.8,0.,0.} 
\definecolor{bleuc}{rgb}{0.2,0.2,1.} 
\definecolor{vert}{rgb}{0.,0.6,0.} 
\definecolor{bleu}{rgb}{0.,0.,1.} 
\definecolor{gris}{rgb}{0.7,0.7,0.7} 
\tikzstyle{fleche}=[->,>=latex,thin]

\newcommand{\compmax}[1][gris]
{\draw [rotate around={88.77591747782445:(-4.54,1.48)},line width=0.4pt,fill=gris!10] (-4.54,1.48) ellipse (2.7439921570113612cm and 1.4322684656655273cm);
\draw [line width=1.2pt,color=#1] (-5.19,3.38)-- (-4.63,2.78);
\draw [line width=1.2pt,color=#1] (-4.63,2.78)-- (-4.91,2.14);
\draw [line width=1.2pt,color=#1] (-5.49,2.6)-- (-4.63,2.78);
\draw [line width=1.2pt,color=#1] (-5.49,2.6)-- (-5.55,1.84);
\draw [line width=1.2pt,color=#1] (-5.55,1.84)-- (-5.65,1.1);
\draw [line width=1.2pt,color=#1] (-4.81,1.28)-- (-4.91,2.14);
\draw [line width=1.2pt,color=#1] (-4.91,2.14)-- (-4.21,1.84);
\draw [line width=1.2pt,color=#1] (-5.45,0.32)-- (-4.97,-0.36);
\draw [line width=1.2pt,color=#1] (-5.45,0.32)-- (-4.71,0.44);
\draw [line width=1.2pt,color=#1] (-5.45,0.32)-- (-4.81,1.28);
\draw [line width=1.2pt,color=#1] (-4.71,0.44)-- (-4.13,-0.22);
\draw [line width=1.2pt,color=#1] (-5.19,3.38)-- (-4.39,3.92);
\draw [line width=1.2pt,color=#1] (-5.19,3.38)-- (-4.49,3.25);
\draw [line width=1.2pt,color=#1] (-3.87,3.14)-- (-3.77,2.42);
\draw [line width=1.2pt,color=#1] (-4.21,1.84)-- (-3.77,2.42);
\draw [line width=1.2pt,color=#1] (-4.21,1.84)-- (-3.51,1.44);
\draw [line width=1.2pt,color=#1] (-4.21,1.84)-- (-4.63,2.78);
\draw [line width=1.2pt,color=#1] (-4.09,1.)-- (-3.51,1.44);
\draw [line width=1.2pt,color=#1] (-4.71,0.44)-- (-3.57,0.38);
\draw [line width=1.2pt,color=#1] (-4.13,-0.22)-- (-4.59,-0.86);
\draw [line width=1.2pt,color=#1] (-4.97,-0.36)-- (-4.59,-0.86);
\draw [color=#1] (-4.59,-0.86) circle (1.5pt);
\draw [color=#1] (-3.87,3.14) circle (1.5pt);
\draw [color=#1] (-3.51,1.44) circle (1.5pt);
\draw [color=#1] (-3.57,0.38) circle (1.5pt);
\draw [color=#1] (-3.77,2.42) circle (1.5pt);
\draw [color=#1] (-4.49,3.25) circle (1.5pt);
\draw [color=#1] (-4.39,3.92) circle (1.5pt);
\draw [color=#1] (-5.19,3.38) circle (1.5pt);
\draw [color=#1] (-4.63,2.78) circle (1.5pt);
\draw [color=#1] (-5.49,2.6) circle (1.5pt);
\draw [color=#1] (-5.55,1.84) circle (1.5pt);
\draw [color=#1] (-4.91,2.14) circle (1.5pt);
\draw [color=#1] (-5.65,1.1) circle (1.5pt);
\draw [color=#1] (-4.81,1.28) circle (1.5pt);
\draw [color=#1] (-5.45,0.32) circle (1.5pt);
\draw [color=#1] (-4.71,0.44) circle (1.5pt);
\draw [color=#1] (-4.97,-0.36) circle (1.5pt);
\draw [color=#1] (-4.13,-0.22) circle (1.5pt);
\draw [color=#1] (-4.09,1.) circle (1.5pt);
\draw [color=#1] (-4.21,1.84) circle (1.5pt);
}

\newcommand{\cmleg}
{\draw (-6.4,4.54) node {$\cm(t)$};
\draw [fleche,line width=1.2pt,dotted,color=black] (-6.57,4.28) -- (-5.674952470825097,3.089828817433463);}
\newcommand{\addinedge}
{\draw [line width=1.2pt,dash pattern=on 5pt off 5pt, color=black] (-4.39,3.92)-- (-3.87,3.14);
\draw [line width=1.2pt,dash pattern=on 5pt off 5pt, color=black] (-4.13,-0.22)-- (-3.57,0.38);}
\newcommand{\outnodes}
{
\draw [fill=black] (-3.15,4.6) circle (1.5pt);
\draw [fill=black] (-2.21,5.08) circle (1.5pt);
\draw [fill=black] (-2.09,4.08) circle (1.5pt);

\draw [fill=black] (-1.69,2.6) circle (1.5pt);
\draw [fill=black] (-0.11,3.3) circle (1.5pt);
\draw [fill=black] (-1.99,1.72) circle (1.5pt);

\draw [fill=black] (-0.45,0.96) circle (1.5pt);
\draw [fill=black] (-0.53,1.78) circle (1.5pt);
\draw [fill=black] (-1.25,0.48) circle (1.5pt);

\draw [fill=black] (-2.87,-0.78) circle (1.5pt);
\draw [fill=black] (-1.81,-0.88) circle (1.5pt);
\draw [fill=black] (-0.83,-0.94) circle (1.5pt);
\draw [fill=black] (0.59,1.) circle (1.5pt);
\draw [fill=black] (-2.65,3.) circle (1.5pt);

\draw [fill=black] (-1.0296229002399726,4.06036338704148) circle (1.5pt);
\draw [fill=black] (-3.33,-1.6) circle (1.5pt);

}
\newcommand\inbridges[1][gris]{
\draw[color=#1] (-4.18,-0.8) node {$a_5$};
\draw[color=#1] (-3.73,3.43) node {$a_1$};
\draw[color=#1] (-3.37,1.7) node {$a_3$};
\draw[color=#1] (-3.43,0.63) node {$a_4$};
\draw[color=#1] (-3.5,2.3) node {$a_2$};
}
\newcommand\outbridges{
\draw[color=black] (-1.8,4.2) node {$x_1$};
\draw[color=black] (-2.5,2.75) node {$x_2$};
\draw[color=black] (-2.2,2) node {$x_3$};
\draw[color=black] (-0.95,0.4) node {$x_{4}$};
\draw[color=black] (-3,-1.7) node {$x_{5}$};
}
\newcommand{\bridges}{
\draw [line width=1.2pt,dash pattern=on 5pt off 5pt,color=vert] (-2.09,4.08)-- (-3.87,3.14);
\draw [line width=1.2pt,dash pattern=on 5pt off 5pt,color=vert] (-1.99,1.72)-- (-3.51,1.44);
\draw [line width=1.2pt,dash pattern=on 5pt off 5pt,color=vert] (-1.25,0.48)-- (-3.57,0.38);
\draw [line width=1.2pt,dash pattern=on 5pt off 5pt,color=vert] (-2.65,3.)-- (-3.77,2.42);
\draw [line width=1.2pt,dash pattern=on 5pt off 5pt,color=vert] (-4.59,-0.86)-- (-3.33,-1.6);
}
\newcommand{\outedges}{
\draw [line width=1.2pt] (-3.15,4.6)-- (-2.09,4.08);
\draw [line width=1.2pt] (-2.09,4.08)-- (-2.21,5.08);
\draw [line width=1.2pt] (-1.99,1.72)-- (-1.69,2.6);
\draw [line width=1.2pt] (-0.53,1.78)-- (-1.99,1.72);
\draw [line width=1.2pt] (0.59,1.)-- (-0.45,0.96);
\draw [line width=1.2pt] (-0.45,0.96)-- (-1.25,0.48);
\draw [line width=1.2pt] (-1.81,-0.88)-- (-2.87,-0.78);
\draw [line width=1.2pt] (-2.65,3.)-- (-1.69,2.6);
}
\newcommand{\outedgesone}{
\draw [line width=1.2pt,color=bleu] (-3.15,4.6)-- (-2.21,5.08);
\draw [line width=1.2pt,color=bleu] (-2.09,4.08)-- (-2.21,5.08);
\draw [line width=1.2pt,color=bleu] (-2.09,4.08)-- (-1.01,4.06);
}
\newcommand{\outedgestwo}{
\draw [line width=1.2pt,color=bleu] (-1.99,1.72)-- (-1.69,2.6);
\draw [line width=1.2pt,color=bleu] (-0.53,1.78)-- (-0.45,0.96);
\draw [line width=1.2pt,color=bleu] (0.59,1.)-- (-0.45,0.96);
\draw [line width=1.2pt,color=bleu] (-0.45,0.96)-- (-1.25,0.48);
\draw [line width=1.2pt,color=bleu] (-2.65,3.)--(-1.69,2.6);
}
\newcommand{\outedgesthree}{
\draw [line width=1.2pt,dash pattern=on 5pt off 5pt,color=bleu] (-1.81,-0.88)-- (-0.83,-0.94);
\draw [line width=1.2pt,dash pattern=on 5pt off 5pt,color=bleu] (-2.87,-0.78)-- (-3.33,-1.6);
}
\newcommand{\extcompone}{
\draw [rotate around={-21.551937422042194:(-2.22,4.49)},line width=0.4pt,color=bleuc] (-2.22,4.49) ellipse (1.4430275304103777cm and 0.6673293441189713cm);
\draw [fleche,line width=1.2pt,dotted,color=bleuc] (-0.01,5.22) -- (-1.3340918374866606,4.760590794577469);
\draw (0.4,5.25) node {$C_{x_1}$};
}
\newcommand{\extcomptone}{
\draw [rotate around={-21.551937422042194:(-2.22,4.49)},line width=0.4pt,color=bleuc] (-2.22,4.49) ellipse (1.4430275304103777cm and 0.6673293441189713cm);
\draw [fleche,line width=1.2pt,dotted,color=bleuc] (-0.01,5.22) -- (-1.3340918374866606,4.760590794577469);
\draw (0.4,5.25) node {$\tilde{C}_{x_1}$};
}
\newcommand{\extcomptwo}{
\draw [rotate around={-55.244087446458686:(-2.21,2.39)},line width=0.4pt,color=bleuc] (-2.21,2.39) ellipse (0.8713795063628349cm and 0.63529697316227cm);
\draw [fleche,line width=1.2pt,dotted,color=bleuc] (1.49,3.26) -- (-1.50992575179117,2.347240831414552);
\draw (2.4,3.3) node {$C_{x_2}=C_{x_3}$};
\draw [rotate around={37.69424046668924:(-0.43,0.96)},line width=0.4pt,color=bleuc] (-0.43,0.96) ellipse (1.1690713336985008cm and 1.0283616986622908cm);
\draw [fleche,line width=1.2pt,dotted,color=bleuc] (1.87,1.88) -- (0.61,1.48);
\draw (2.2,1.95) node {$C_{x_4}$};
}
\newcommand{\extcompfive}{
\draw [rotate around={45.70731936854426:(-3.11,-1.23)},line width=0.4pt,color=bleuc] (-3.11,-1.23) ellipse (0.6844736550228593cm and 0.3747054635581775cm);
\draw [fleche,line width=1.2pt,dotted,color=bleuc] (-1.43,-2.76) -- (-2.85,-1.5);
\draw (-1.,-2.8) node {$C_{x_5}$};
}
\begin{figure}[tb]
\begin{minipage}[t]{\textwidth}
\begin{center}
\scalebox{0.5}{
\begin{tikzpicture}[line cap=round,line join=round,>=triangle 45,x=1.0cm,y=1.0cm]
\clip(-7.03,-2.7) rectangle (3.65,6.46);
\draw [line width=0.4pt] (-2.61,1.56) circle (4.208800304124681cm);
\compmax[black];
\cmleg
\outnodes;
\outedges;
\end{tikzpicture}
}
\caption{ A realization of graph $\cG(t)$ for which $L_n(t)=20$.} 
\label{graph_g}
\end{center}
\end{minipage}\\
\begin{minipage}[t]{0.48\textwidth}
\scalebox{0.5}{
\begin{tikzpicture}[line cap=round,line join=round,>=triangle 45,x=1.0cm,y=1.0cm]
\clip(-7.03,-3) rectangle (3.65,6.46);
\draw [line width=0.4pt] (-2.61,1.56) circle (4.208800304124681cm);
\compmax[black];
\cmleg;
\outnodes;
\outedgesone;
\outedgestwo;
\end{tikzpicture}
}
\caption{ A realization of graph $\cGt(t)$ obtained from $\cG(t)$ by resampling independently with probability $\frac{t}{n}$, all edges `external' to $\cm(t)$.} 
\label{graph_gt}
\end{minipage}\quad
\begin{minipage}[t]{0.48\textwidth}
\scalebox{0.5}{
\begin{tikzpicture}[line cap=round,line join=round,>=triangle 45,x=1.0cm,y=1.0cm]
\clip(-7.03,-3) rectangle (3.65,6.46);
\draw [line width=0.4pt] (-2.61,1.56) circle (4.208800304124681cm);
\compmax[black];
\addinedge
\outnodes;
\bridges;\inbridges[black];\outbridges[black];
\outedgesone;\outedgestwo;\outedgesthree;
\extcompone;\extcomptwo;\extcompfive;
\end{tikzpicture}
}
\caption{Graph $\cGt(t,h)$ obtained from  $\cGt(t)$ by adding edges (drawn as dashed lines) independently with probability $\frac{h}{n-t}$. In this realization,  there are ${Y_n=5}$ bridges drawn as green dashed lines and nine external edges drawn in blue. Its largest component is the union of $\cm(t)$ and $\cup_{i=1}^{5}C_{x_i}$. } 
\label{graph_gth}
\end{minipage}
\begin{minipage}[t]{0.48\textwidth}
\scalebox{0.5}{
\begin{tikzpicture}[line cap=round,line join=round,>=triangle 45,x=1.0cm,y=1.0cm]
\clip(-7.03,-3) rectangle (3.65,6.46);
\draw [rotate around={154.87136048572663:(-0.7988937554054354,2.158182357464605)},line width=0.4pt,color=rouge,fill=rouge!5] (-0.7988937554054354,2.158182357464605) ellipse (2.1870699374959908cm and 1.987983698894146cm);
\draw [line width=0.4pt] (-2.61,1.56) circle (4.208800304124681cm);
\outnodes;
\outbridges;
\outedgestwo;
\outedgesthree;
\extcomptone;\extcomptwo;\extcompfive;
\draw [line width=1.2pt,color=rouge] (-1.0296229002399726,4.06036338704148)-- (-0.11,3.3);
\draw [line width=1.2pt,color=rouge] (-2.09,4.08)-- (-3.15,4.6);
\draw [line width=1.2pt,color=rouge] (-1.69,2.6)-- (-1.0296229002399726,4.06036338704148);
\draw [line width=1.2pt,color=rouge] (-0.53,1.78)-- (-1.0296229002399726,4.06036338704148);
\draw [fleche,line width=1.2pt,dotted,color=rouge] (1.25,4.42) -- (0.07295566625871719,3.944833079584321);
\draw (1.52,4.68) node {$\tilde{C}_{x_2}$};
\end{tikzpicture}
}
\caption{ Subgraph $\cGt^{e,1}(t,h)$. New edges with respect to $\cGt^{e}(t,h)$ are drawn in red and obtained from an independent resampling of edges having an endpoint in $\tilde{C}_{x_1}$, with probability $\frac{t+h}{n}$. In this example, vertices $x_3$ and $x_4$ are in $\tilde{C}_{x_2}$. Therefore, $\tilde{C}_{x_3}=\tilde{C}_{x_4}=\emptyset$ and $\cGt^{e,3}(t,h)=\cGt^{e,2}(t,h)=\cGt^{e,1}(t,h)$.  } 
\label{graph_gtone}
\end{minipage}\quad
\begin{minipage}[t]{0.48\textwidth}
\scalebox{0.5}{
\begin{tikzpicture}[line cap=round,line join=round,>=triangle 45,x=1.0cm,y=1.0cm]
\clip(-7.03,-3) rectangle (3.65,6.46);
\draw [rotate around={68.19859051364817:(-2.63,0.08)},line width=0.4pt,color=orange,fill=orange!5] (-2.63,0.08) ellipse (2.173351839802012cm and 0.2888913629209182cm);
\draw [line width=0.4pt] (-2.61,1.56) circle (4.208800304124681cm);
\outnodes;
\outbridges;
\outedgesthree;
\extcomptone;\extcompfive;
\draw [rotate around={154.87136048572663:(-0.7988937554054354,2.158182357464605)},line width=0.4pt,color=rouge] (-0.7988937554054354,2.158182357464605) ellipse (2.1870699374959908cm and 1.987983698894146cm);
\draw [line width=1.2pt,color=orange] (-1.0296229002399726,4.06036338704148)-- (-0.11,3.3);
\draw [line width=1.2pt,color=orange] (-2.09,4.08)-- (-1.0296229002399726,4.06036338704148);
\draw [line width=1.2pt,color=orange] (-1.69,2.6)--(-0.53,1.78) ;
\draw [fleche,line width=1.2pt,dotted,color=rouge] (1.25,4.42) -- (0.07295566625871719,3.944833079584321);
\draw [line width=1.2pt,color=orange] (-0.83,-0.94)-- (0.59,1.);
\draw [line width=1.2pt,color=orange] (-2.87,-0.78)-- (-1.99,1.72);
\draw [line width=1.2pt,color=orange] (-2.65,3.)-- (-3.15,4.6);

\draw (1.52,4.68) node {$\tilde{C}_{x_2}$};
\draw [fleche,line width=1.2pt,dotted,color=orange] (-4.4,-2.7) -- (-3.43,-1.92);
\draw (-4.6,-2.8) node {$\tilde{C}_{x_5}$};
\end{tikzpicture}
}
\caption{ Subgraph $\cGt^{e,4}(t,h)$. New edges with respect to $\cGt^{e,3}(t,h)$ are drawn in orange and obtained from an independent resampling of edges having an endpoint in $\cup_{i=1}^4\tilde{C}_{x_i}$, with probability $\frac{t+h}{n}$. } 
\label{graph_gtfour}
\end{minipage}
\end{figure}

Inequality~\eqref{vdhmajcomp} implies that the conditional distribution of $|\tilde{C}_{x_i}|$ is dominated by the total progeny of a $\BGW(n-L_n(t),\frac{t+h}{n})$. Therefore, there exists a family of i.i.d. random variables $M_i$ independent of $Y_n$, having the same distribution as the total progeny of a $\BGW(n-L_n(t),\frac{t+h}{n})$ such that  $|\tilde{C}_{x_i}|\le M_i$ for every $i$.
 This ends the proof of Assertions \ref{encadrement:va} and \ref{encadrement:M}.
 
 Assertion \ref{encadrement:Y} is a direct consequence of the construction as already mentioned.
\clearpage
 To prove Assertion \ref{encadrement:C}, we first note that inequality \eqref{vdhmincomp} implies that the random variable $\min(L_{n,p},\ln^{2}(n))$ stochastically  dominates 
  $\min(T_{n-\ln^{2}(n),p},\ln^{2}(n))$ since the total progeny of a $\BGW(j,p)$ stochastically dominates the total progeny of a $\BGW(j-1,p)$ for every $j\in\mathbb{N}^*$. Therefore,  the conditional distribution of $|C_{x_i}|$ dominates the conditional distribution of the minimum of $\ln(n)^2$ and of the total progeny of a $\BGW(n-L_n(t)-\ln(n)^2,\frac{t+h}{n})$. 

\medskip 

It remains to prove the domination of $|\mathcal{R}|$ (Assertion \ref{encadrement:A}). Recall that $\mathcal{R}$ is the set of indices $1\le i \le Y_n$ such that $C_{x_i}$ has already been visited by a bridge of index $j<i$, i.e. $x_i\in\bigcup_{j=1}^{i-1}C_{x_j}$. Since the targets are picked uniformly outside $\cm(t)$, the probability of this event is at most 
\[\frac{\sum_{j<i}|C_{x_j}|}{n-L_n(t)}.\]
On the event $\mathcal{A}_n(t)$, this probability is at most 
\[\frac{Y_n(\ln{n})^2}{n-L_n(t)},\] 
which implies \ref{encadrement:A}.
\end{proof}
\section{Convergence of the finite dimensional distributions}
\label{convergencefdd} 
The aim of this section is to prove that $X_n(t):=\frac1{\sqrt n}(L_n(t)-n\rho(t))$ converges in the sense of finite dimensional distributions towards 
\[X(t)=\frac{1-\rho(t)}{1-t(1-\rho(t))}B\left(\frac{\rho(t)}{1-\rho(t)}\right).\]
As already mentioned (see \eqref{SDE} in Section \ref{heuristique}), this process is actually a diffusion with drift $a(t,x):=\left(\frac{1-2\rho(t)}{1-\lambda(t)} -\frac{\rho(t)(1-\rho(t))t}{(1-\lambda(t))^2}\right)x$
and diffusion coefficient $b(t,x):= \frac{{\rho(t)(1-\rho(t))}}{(1-t(1-\rho(t)))^{3}}$.

To show this convergence,  we are going to adapt a theorem by Norman \cite{norman} to a non-compact state space.  In the case of a sequence of compact-valued processes, its  statement makes it possible to reduce the proof of the finite dimensional convergence of a sequence of random processes, to the study of the first three moments of their infinitesimal increments, in the case where the limiting process is a diffusion. The extension of this statement we shall apply is:
\begin{prop}
\label{normanth1}
For each $n\geq 1$, let  $\{Z_{n}(t), t\geq 0\}$ be a stochastic process in $\mathbb{R}$ adapted to a filtration $\{\mathcal{F}_{n}(t),\ t\geq 0\}$.  Assume there exist two $C^3$ functions $\alpha$ and $\beta$, three sequences of nonnegative functions $(e_{1,n})_n$, $(e_{2,n})_n$, $(e_{3,n})_n$ and a sequence $(N_n)_n$ of positive integers converging to $+\infty$,  satisfying, for every $t\geq0$ and every $h>0$:
\begin{xalignat}{2}
\label{hypA1}
 &\mid\E(Z_{n}(t+h)-Z_{n}(t)\mid\mathcal{F}_{n}(t))- h \alpha(Z_{n}(t))\mid \leq e_{1,n}(t,h,Z_{n}(t))\nonumber\\
 &\mid \E((Z_{n}(t+h)-Z_{n}(t))^2\mid \mathcal{F}_{n}(t))- h \beta(Z_{n}(t))\mid \leq e_{2,n}(t,h,Z_{n}(t))\nonumber\\
\text{(Assumption \hypertarget{hypA1}{A1})}\quad&\mid \E(|Z_{n}(t+h)-Z_{n}(t)|^3\mid \mathcal{F}_{n}(t))\mid \leq e_{3,n}(t,h,Z_{n}(t))\nonumber \\
&\forall 0<s<t, \nonumber\\
& \sum_{k=0}^{N_n-1}\sum_{i=1}^{3}\E\left(e_{i,n}\left(s+k\frac{t-s}{N_n},\frac{t-s}{N_n},Z_{n}(s+k\frac{t-s}{N_n})\right)\right)\underset{n\rightarrow +\infty}{\rightarrow} 0. \nonumber
\end{xalignat}
Let $(Z(t))_{t\geq 0}$ be the diffusion process with drift function $\alpha$ and diffusion coefficient $\beta$; denote its semigroup by $\{T_{t},  t\geq 0 \}$. Define the three functions $\epsilon_{1}, \epsilon_{2}, \epsilon_{3}$ by, for every $t\geq 0$ and $h>0$, 
\begin{eqnarray*}
&& \E(Z(t+h)-Z(t)\mid Z(t))= h \alpha(Z(t))+\epsilon_{1}(h,Z(t))\\
&&\E((Z(t+h)-Z(t))^2\mid Z(t))= h \beta(Z(t))+\epsilon_{2}(h,Z(t))\\
&&\E(|Z(t+h)-Z(t)|^3\mid Z(t))= \epsilon_{3}(h,Z(t)).
\end{eqnarray*}
Let $\mc{H}$ denote the set of bounded $C^3$ functions defined on $\mathbb{R}$ whose derivatives are also bounded.
Assume the following properties: 
\begin{eqnarray*}
\text{(Assumption \hypertarget{hypA234}{A2}) } && \forall f\in\mc{H},\,  
T_{t}f\in\mc{H}\text{ and }\forall \tau>0\sup_{0\leq t \leq \tau}\sum_{k=0}^{3}||(T_{t}f)^{(k)}||_{\infty}<+\infty  \\
\text{(Assumption A3) }&&\forall 0<s<t,\, \sum_{k=0}^{N_n-1}\sum_{i=1}^{3}\E\left(\left|\epsilon_i\left(\frac{t-s}{N_n},Z_{n}(s+k\frac{t-s}{N_n})\right)\right|\right)\underset{n\rightarrow +\infty}{\rightarrow} 0\\
\text{(Assumption A4) }&& (Z_{n}(0))_n \text{ converges in distribution to } Z(0).
\end{eqnarray*}
Then for every $k\geq 1$ and for every $0<s_1<\ldots<s_k$, $(Z_{n}(s_1),\ldots,Z_{n}(s_k))_n$ converges in distribution to  $(Z(s_1),\ldots,Z(s_k))$. 
\end{prop}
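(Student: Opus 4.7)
The strategy is to extend Norman's semigroup method to the non-compact setting, exploiting the fact that the error conditions A1 and A3 are already formulated as expectations along the trajectory of $Z_n$. The fundamental step is to prove one-dimensional convergence in distribution, namely $\E[f(Z_n(t))] \to \E[f(Z(t))]$ for every $f \in \mc{H}$ and every $t > 0$; finite-dimensional convergence then follows by combining this with the Markov property of the limiting diffusion.

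For the one-dimensional step, fix $0 \leq s < t$, set $h = (t-s)/N_n$, $t_k = s + k h$, and introduce $g_k := T_{t-t_k} f$. Assumption A2 ensures that each $g_k$ lies in $\mc{H}$ with $\sum_{i=0}^{3} \|g_k^{(i)}\|_\infty$ bounded uniformly in $k$ and $n$. The semigroup identity $g_k = T_h g_{k+1}$ yields the telescoping decomposition
\[
\E[f(Z_n(t))] - \E[(T_{t-s} f)(Z_n(s))] = \sum_{k=0}^{N_n - 1} \E\bigl[g_{k+1}(Z_n(t_{k+1})) - (T_h g_{k+1})(Z_n(t_k))\bigr].
\]
Conditioning on $\mathcal{F}_n(t_k)$ and Taylor-expanding $g_{k+1}$ to order three around $Z_n(t_k)$, A1 controls the first three conditional moments of the increment $Z_n(t_{k+1}) - Z_n(t_k)$ up to the errors $e_{i,n}(t_k, h, Z_n(t_k))$. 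Applying the same Taylor expansion to the diffusion $Z$ started at $Z_n(t_k)$ and using the definition of $\epsilon_i$ gives analogous control of $(T_h g_{k+1})(Z_n(t_k)) - g_{k+1}(Z_n(t_k))$ up to errors $\epsilon_i(h, Z_n(t_k))$. The leading terms involving $\alpha$ and $\beta$ cancel between the two expansions, so each summand is bounded in absolute value by a constant (coming from the uniform bound on derivatives of $g_{k+1}$) times $\sum_{i=1}^3 \bigl(e_{i,n}(t_k, h, Z_n(t_k)) + |\epsilon_i(h, Z_n(t_k))|\bigr)$. Summing and taking expectations, A1 and A3 force the right-hand side to zero. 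Specialising to $s = 0$, A2 (boundedness and continuity of $T_t f$) combined with A4 yields $\E[(T_t f)(Z_n(0))] \to \E[(T_t f)(Z(0))] = \E[f(Z(t))]$, closing the one-dimensional step.

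To lift this to the joint law of $(Z_n(s_1), \ldots, Z_n(s_p))$, I proceed by descending induction on the number of coordinates. Given test functions $f_1, \ldots, f_p \in \mc{H}$, set $\psi_p := f_p$ and recursively $\psi_j(z) := f_j(z) \cdot (T_{s_{j+1} - s_j} \psi_{j+1})(z)$ for $j = p-1, \ldots, 1$. Assumption A2, together with the closure of $\mc{H}$ under pointwise products (immediate since $\mc{H}$ consists of bounded $C^3$ functions whose derivatives are also bounded), ensures that each $\psi_j$ belongs to $\mc{H}$. The Markov property of the limiting diffusion gives $\E\bigl[\prod_{i=1}^{p} f_i(Z(s_i))\bigr] = \E[\psi_1(Z(s_1))]$. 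Running the one-dimensional argument between consecutive times $s_j$ and $s_{j+1}$ with test function $\psi_{j+1}$, and taking conditional expectations with respect to $\mathcal{F}_n(s_j)$, transfers this identity to $Z_n$ up to a total error controlled by A1 and A3, which vanishes as $n \to \infty$.

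The main obstacle will be ensuring that the sup-norms $\|g_k^{(i)}\|_\infty$ (and the analogous quantities for the $\psi_j$) appearing in the Taylor remainders can be pulled out of the expectation uniformly in $k$ and $n$; this is precisely what A2 is designed to guarantee. Once this uniform bound is in hand, the argument is largely insensitive to the unboundedness of the state space, since A1 and A3 are phrased directly in terms of expectations of error functions along the trajectory of $Z_n$, so that any possible large-deviation contribution of $Z_n$ is already absorbed into the standing hypotheses.
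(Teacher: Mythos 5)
Your proposal is correct and follows essentially the same path as the paper's proof in Appendix B: introduce the grid functions $g_k = T_{(N_n-k)h}f$, telescope the difference between $\E[f(Z_n(t))\mid\mathcal{F}_n(s)]$ and $T_{t-s}f(Z_n(s))$, Taylor-expand both the process increment (using A1) and the semigroup action (using the $\epsilon_i$), observe that the $\alpha$- and $\beta$-terms cancel, bound the remainders by the uniform derivative estimates from A2, and conclude via A1/A3, with A4 handling the initial time. The only cosmetic difference is in organizing the lift to finite-dimensional laws: you build the auxiliary functions $\psi_j$ explicitly by descending recursion, whereas the paper phrases the same reduction as an induction on the number of coordinates via the functional $I_{n,u,v}(f)=\E(f(Z_n(v))\mid\mathcal{F}_n(u))-T_{v-u}f(Z_n(u))$; both rely on the closure of $\mc{H}$ under products and the semigroup, and on the conditional (not merely unconditional) form of the telescoping estimate $\E[|I_{n,u,v}(f)|]\to 0$, which you correctly flag as the quantity to control when multiplying by $\prod_{i\leq j}f_i(Z_n(s_i))$.
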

The proof of this Proposition is postponed to Appendix \ref{annex:proofthnornm}. \medskip

Note that this Proposition is restricted to the case where the limiting process is homogeneous in time, which $X$ is not. To overcome this difficulty, we consider the auxiliary process \[Z_n:=(uX_n)\circ v^{(-1)}\] and use Proposition \ref{normanth1} to show its convergence to a standard Brownian motion.
  In this case, Proposition \ref{normanth1} will apply with diffusion coefficients $\alpha\equiv 0$, $\beta\equiv 1$. This argument is detailed in Section \ref{sub-moments-aux}.
However, before applying this time change we need to evaluate the first three moments of the increments of the original process $X_n(t):=\frac1{\sqrt n}(L_n(t)-n\rho(t))$. This is the goal of Section \ref{sub-moments}.

\subsection{The first three moments of \texorpdfstring{$X_n(t+h)-X_n(t)$}{Xn(t+h)-Xn(t)}}\label{sub-moments}

Set $\Delta X_n(t)=X_n(t+h)-X_n(t)$. 
Let $\{\mathcal{F}_n(t),\, t\geq t_0\}$ denote the filtration generated by the process $\{X_n(t),\, t\geq t_0\}$ for every $n\in\mathbb{N}$. In the sequel, we will use the notation
\[\Pt(\cdot)=\P( \cdot \mid \mathcal{F}_n(t)) \text{ and }\Et(\cdot)=\E( \cdot \mid \mathcal{F}_n(t)).\]
Unless specified, the notation $O(.)$ is always uniform in $t\in [t_0,t_1]$.

\paragraph*{First moment.}
We begin by showing that the expectation of the conditional increment of $X_n(t)$ between $t$ and $t+h$ verifies:
\begin{lem}\label{lem:deltaX}
For $x\in \RR$,  $t>1$,  $h>0$ and $n\in \NN$, set 
\begin{eqnarray*}
a(t,x)&=&\left(\frac{1-2\rho(t)}{1-\lambda(t)} -\frac{\rho(t)(1-\rho(t))t}{(1-\lambda(t))^2}\right)x\\
r_{1,n}(h,x)&= &(hn^{-0.1}+n^{-7.5})+\sqrt{n}\un_{\{|x|> n^{0.2}\}}.
\end{eqnarray*}
There exists a constant $C>0$, such that for $n>t_1$, $h\leq n^{-1}$ and $t\in [t_0,t_1],$ 
\[
\left| \Et(\Delta X_n(t)) - ha(t, X_n(t)) \right| \leq C r_{1,n}(h,X_n(t)). 
\]

\end{lem}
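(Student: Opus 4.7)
My plan is to combine the coupling of Lemma~\ref{lem-behrisch}, the decomposition of $\Deltat(t)$ from Lemma~\ref{encadrement}, and a first-order Taylor expansion in the small parameters $X_n(t)/\sqrt{n}$ and $h$. On $\{|X_n(t)|>n^{0.2}\}$ the trivial bound $|L_n-n\rho|\leq n$ gives $|\Et(\Delta X_n(t))|\leq 2\sqrt{n}$ and $|ha(t,X_n(t))|=O(\sqrt{n})$ when $h\leq n^{-1}$, so the estimate reduces to the $\sqrt{n}\un_{\{|x|>n^{0.2}\}}$ term of $r_{1,n}$. On $\mathcal{E}_n(t)$, Lemma~\ref{lem-behrisch} with $k=1$ replaces $\Et(\Delta L_n(t))$ by $\Et(\Deltat(t))$ at a cost of $C_1 n^{-8}$, which divided by $\sqrt{n}$ is absorbed in the $n^{-7.5}$ term.

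To evaluate $\Et(\Deltat(t))$ I use the representation $\Deltat(t)=\sum_{i=1}^{Y_n}|C^{\star}_{x_i}|$. Conditioning on $\mathcal{F}_n(t)$, Assertion~\ref{encadrement:Y} yields $\Et(Y_n)=L_n(t)(n-L_n(t))\frac{h}{n-t}$, while Assertions~\ref{encadrement:va}--\ref{encadrement:C} sandwich the conditional mean of $|C_{x_i}|$ between the mean of the truncated subcritical $\BGW(n-L_n(t)-\ln^2 n,\frac{t+h}{n})$ total progeny and $\Et(M)=\frac{n}{n-(n-L_n(t))(t+h)}$; these two bounds agree up to an $O(\ln^2 n/n)$ discrepancy. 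The overcount from $\mathcal{R}$ is controlled on $\mathcal{A}_n(t)$ using Assertion~\ref{encadrement:A}: it is bounded by $\ln^2 n\cdot\Et(|\mathcal{R}|\un_{\mathcal{A}_n(t)})\leq\ln^4 n\,\Et(Y_n^2)/(n-L_n(t))=O(h\ln^4 n)$ for $h\leq n^{-1}$, and the event $\mathcal{A}_n(t)^c$ has conditional probability $O(n^{-10})$ and contributes $O(n^{-9})$ because $\Deltat(t)\leq n$. Divided by $\sqrt{n}$, all these errors fit well within $r_{1,n}$.

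It finally remains to Taylor-expand $\Et(Y_n)\Et(M)$ in $X_n(t)/\sqrt{n}$ and $h$, using $L_n(t)=n\rho(t)+\sqrt{n}X_n(t)$ and the identity $\rho'(t)=\rho(t)(1-\rho(t))/(1-\lambda(t))$ derived from \eqref{eqsurviv}. The zeroth-order contribution $hn\rho'(t)$ cancels exactly, after division by $\sqrt{n}$, with $h\sqrt{n}\rho'(t)$ coming from $\sqrt{n}(\rho(t+h)-\rho(t))$, leaving only an $O(h^2\sqrt{n})$ remainder from $\rho''$; the first-order term in $X_n(t)$ reassembles precisely into $h\sqrt{n}\,a(t,X_n(t))$, matching the claimed drift; the residual quadratic and cross terms are $O(h^2 n+hX_n(t)^2)$, giving after division by $\sqrt{n}$ an error of order $O(h^2\sqrt{n}+hX_n(t)^2/\sqrt{n})=O(hn^{-0.1})$ on $\mathcal{E}_n(t)$ for $h\leq n^{-1}$. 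The main obstacle I anticipate is the bookkeeping: tracking each remainder uniformly in $t\in[t_0,t_1]$ (which requires $\lambda(t)$ to stay bounded away from $1$, guaranteed on this interval) and verifying that every error falls inside $r_{1,n}$. Once the cancellation supplied by $\rho'=\rho(1-\rho)/(1-\lambda)$ is recognized as extracting the exact infinitesimal increment of $n\rho$, the remaining computation is a routine, though tedious, expansion.
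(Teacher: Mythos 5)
Your strategy is essentially the paper's: pass from $\Delta L_n(t)$ to $\Deltat(t)$ via Lemma~\ref{lem-behrisch}, compute $\Et[\Deltat(t)]$ by a Wald identity using the conditional law of $Y_n$ and the sandwich of $\Et[|C_{x_i}|]$ between $m^-_{n,1}$ and $m^+_{n,1}$, control $\mathcal{R}$ on $\mathcal{A}_n(t)$, and Taylor-expand $nh\,f_1(t+h,\rho(t)+X_n(t)/\sqrt{n})$ against $\sqrt n(\rho(t+h)-\rho(t))$ using $\rho'=f_1(t,\rho(t))$; all of this matches the paper step by step. One place where you implicitly diverge, to your advantage: the paper obtains the lower bound on $\Et[\Deltat(t)]$ by bounding the gap $\Et[\sum M_i-\Deltat(t)]$, which requires truncating the unbounded quantity $\sum M_i$ at $n^3$ and introducing the auxiliary event $\tilde{\mathcal A}_n(t)$; you instead sandwich $\Et[|C_{x_i}|]$ directly and never compare to $\sum M_i$ on the low side, so the quantities you manipulate ($\Deltat(t)$, $\sum_i|C_{x_i}|$) are bounded by $n$ and $Y_n\,n$ respectively, which dispenses with the $n^3$ truncation and $\tilde{\mathcal A}_n(t)$. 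One minor imprecision: the statement that $\mathcal{A}_n^c(t)$ ``contributes $O(n^{-9})$ because $\Deltat(t)\le n$'' justifies discarding $\Et[\Deltat(t)\un_{\mathcal{A}_n^c(t)}]$, but you also need to discard the indicator $\un_{\mathcal{A}_n(t)}$ inside the Wald identity for $\sum_i|C_{x_i}|$, and this sum is only bounded by $Y_n\,n$, not $n$; the correct bookkeeping is a Cauchy--Schwarz argument $\Et[Y_n\un_{\mathcal{A}_n^c(t)}]\le\Et[Y_n^2]^{1/2}\Pt(\mathcal{A}_n^c(t))^{1/2}$ combined with the arbitrary polynomial rate $\Pt(\mathcal{A}_n^c(t))=O(n^{-R})$ supplied by Lemma~\ref{boundan}. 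This is easy to repair and does not threaten the argument.
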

\begin{proof}

Recall that \[\Delta X_n(t)= \frac 1{\sqrt n}\left(\Delta L_n(t)-n(\rho(t+h)-\rho(t))\right).\]
On the event $\mathcal{E}_n(t)$ which belongs to $\mc{F}_n(t),$ Lemma \ref{lem-behrisch} gives  \[\Et[\Delta L_n(t)]=\Et[\Deltat(t)]+O(n^{-8}).\]

We now use Lemma \ref{encadrement} to get an upper bound of $\Et[\Deltat(t)]$. On the event $\mathcal{E}_n(t)$, we denote by $m^{+}_{n,1}$ the conditional first moment of the total progeny of  the subcritical ${\BGW(n-L_n(t),\frac{t+h}{n})}$ given $\mc{F}_t$.   On the event $\mathcal{E}_n(t)$, 
\begin{align}\label{Z+}
\Et[\Deltat(t)]\le \Et\left(\sum_{i=1}^{Y_n}M_i\right)&= \Et(Y_n)m^{+}_{n,1}=L_n(t)(n-L_n(t))\frac{h}{n-t}\frac{1}{1-(n-L_n(t))\frac{t+h}{n}}\nonumber \\
&=n h f_1\left(t+h,\rho(t)+\frac{X_n(t)}{\sqrt{n}}\right)\left(1+O(\frac1n)\right)
 \end{align}
with $f_1(t,x)=\frac{x(1-x)}{1-t(1-x)}$.

This induces the following upper bound for $\Et[\Delta X_n(t)]$ on the event $\mathcal{E}_n(t)$,
\begin{multline*}\Et[\Delta X_n(t)]\le\sqrt{n} \left(h f_1\left(t+h,\rho(t)+\frac{X_n(t)}{\sqrt{n}}\right)(1+O(\frac1n))-(\rho(t+h)-\rho(t))\right)\\+O(n^{-8.5}).
\end{multline*}
Noticing that $\rho'(t)=f_1(t,\rho(t))$, on the event $\mathcal{E}_n(t)$, this upper bound is equal  to 
\begin{eqnarray}\label{mom11}
\Et[\Delta X_n(t)]&\le&
\sqrt{n} h \left(f_1\left(t+h,\rho(t)+\frac{X_n(t)}{\sqrt{n}}\right)-f_1(t,\rho(t))\right)\nonumber\\
&&-\sqrt{n}(\rho(t+h)-\rho(t)-h\rho'(t))+O\left(\frac{h}{\sqrt{n}}+n^{-8.5}\right)\nonumber\\
&=&ha(t,X_n(t))+r^{+}_{1,n}(t,h,X_n(t))
\end{eqnarray}
with 
\begin{eqnarray*}
a(t,X_n(t))&=& X_n(t)\frac{\partial f_1}{\partial x}(t,\rho(t))=\left(\frac{1-2\rho(t)}{1-\lambda(t)} -\frac{\rho(t)(1-\rho(t))t}{(1-\lambda(t))^2}\right)X_n(t)\\
r^{+}_{1,n}(t,h,X_n(t))&=&\sqrt{n} h \left(f_1\left(t+h,\rho(t)+\frac{X_n(t)}{\sqrt{n}}\right)-f_1\left(t,\rho(t)+\frac{X_n(t)}{\sqrt{n}}\right)\right)\\
&&+\sqrt{n} h \left(f_1\left(t,\rho(t)+\frac{X_n(t)}{\sqrt{n}}\right)-f_1(t,\rho(t))-\frac{X_n(t)}{\sqrt{n}}\frac{\partial f_1}{\partial x}(t,\rho(t))\right)\nonumber\\
&&-\sqrt{n}\left(\rho(t+h)-\rho(t)-h\rho'(t)\right)+O\left(\frac{h}{\sqrt{n}}+n^{-8.5}\right)\nonumber\\
&=& O\left(\sqrt n h^2+(1+X^{2}_{n}(t)) \frac{h}{\sqrt n}+n^{-8.5}\right).\nonumber
\end{eqnarray*}

We now want to show that this upper bound is close to the true expectation of $\Delta X_n(t)$ conditional on $\mathcal{F}_n(t)$. The only place where we used an inequality is in \eqref{Z+}. 
Accordingly, we want to bound $\Et\left[\sum_{i=1}^{Y_n}M_i-\Deltat(t)\right]$.
Note that a priori $\sum_{i=1}^{Y_n}M_i$ is not bounded. Therefore, we write 
\begin{multline}\label{lowerbound1}
\Et\left(\sum_{i=1}^{Y_n}M_i-\Deltat(t)\right)=\Et\left(\left(\sum_{i=1}^{Y_n}M_i-n^3\right)\un_{\{\sum_{i=1}^{Y_n}M_i>n^3\}}\right)\\+\Et\left(\min\left(\sum_{i=1}^{Y_n}M_i,n^3\right)-\Deltat(t)\right).
\end{multline}

The first term on the right-hand side is negligible. Indeed, using Cauchy--Schwarz inequality and setting $\tilde{{\mathcal A}}_n(t)=\{M_i\le \ln^2(n), \forall 1\le i \le n^2\}$
\begin{align*}\Et\left(\left(\sum_{i=1}^{Y_n}M_i-n^3\right)\un_{\{\sum_{i=1}^{Y_n}M_i>n^3\}}\right)&\le\Et\left(\left(\sum_{i=1}^{Y_n}M_i\right)^2\right)^{1/2}\Pt\left(\sum_{i=1}^{Y_n}M_i>n^3\right)^{1/2}\\&=O\left(n^2\Pt(\tilde{{\mathcal A}}^c_n(t))^{1/2}\right)\end{align*}
and we will show in Appendix \ref{rareevent} that on the event ${\mathcal E}_n(t),$ $\Pt(\tilde{{\mathcal A}}^c_n(t))\leq n^{-20}$.

To bound the second term on the right-hand side of \eqref{lowerbound1}, we will apply again Lemma \ref{encadrement}. Using the event $\mathcal{A}_n(t)=\{\text{no component of }\cGt^{e}(t,h) \text{ has size greater than } \ln^2(n)\}$, we have
\begin{multline*}0\leq\Et\left(\min\left(\sum_{i=1}^{Y_n}M_i,n^3\right)-\Deltat(t)\right)\le 
\Et\left(\left(\sum_{i=1}^{Y_n}M_i-\Deltat(t)\right)\un_{\mathcal{A}_n(t)}\right)\\ +n^3\Pt\left(
\mathcal{A}^{c}_n(t)\right).
\end{multline*}
 On the event $\mc{E}_n(t)$, let us recall that $m^{+}_{n,1}$ stands for the conditional first moment of the total progeny of  the subcritical ${\BGW(n-L_n(t),\frac{t+h}{n})}$ given $\mc{F}_t$.  Denoting by $m^{-}_{n,1}$ the conditional expectation of the minimum of  the total progeny of a $\BGW(n-\Lt(t)-\ln^2(n),\frac{t+h}{n})$ and $\ln^2(n)$, we have on the event $\mc{E}_n(t)$,
\[
\Et\left[\left(\sum_{i=1}^{Y_n}M_i-\Deltat(t)\right)\un_{\mathcal{A}_n(t)}\mid Y_n\right] \le Y_n (m^{+}_{n,1}-m^{-}_{n,1})+\ln^2(n)\Et[|\mathcal{R}|\un_{\mathcal{A}_n(t)}\mid Y_n].\]
Let us recall that if $T$ is the total progeny of a $\BGW(n,p)$ for $p<\frac{1}{n}$, its first moment is $\frac{1}{1-np}$  and  there exists a constant $C$ such that 
$\E(T\un_{\{T> \ln(n)^2\}})\leq C \P(T> \ln^2(n))^{1/2}=O(n^{-9})$ since $T$ has finite exponential moments. In particular,  
\[\E(\min(T,\ln^2(n)))=\E(T)-\E(T\un_{\{T>\ln^2(n)\}})+\ln^2(n)\P(T>\ln^2(n))\geq \E(T)+O(n^{-9}).\]
 With the notation $f_2(t,x):=\frac{1}{1-t(1-x)}$, we obtain on the event $\mc{E}_n(t)$:
\[
m^{+}_{n,1}-m^{-}_{n,1}\leq f_2\left(t+h,\rho(t)+\frac{X_n(t)}{\sqrt{n}}\right)- f_2\left(t+h,\rho(t)+\frac{X_n(t)}{\sqrt{n}}+\frac{\ln^2(n)}{n}\right)+O(n^{-9}).\]
By property \ref{encadrement:A} of Lemma \ref{encadrement}
\[\Et[|\mathcal{R}|\un_{\mathcal{A}_n(t)}\mid Y_n]\le\frac{Y_n^2\ln^2(n)}{n\left(1-\left(\rho(t)+\frac{X_n(t)}{\sqrt{n}}\right)\right)}. \]
Therefore, on $\mathcal{E}_n(t)$, 
\begin{eqnarray}
\Et\Big(\Big(\sum_{i=1}^{Y_n}M_i&-&\Deltat(t)\Big)\un_{\mathcal{A}_n(t)}\mid Y_n\Big) \nonumber \\
&\leq & Y_n\left(f_2\left(t+h,\rho(t)+\frac{X_n(t)}{\sqrt{n}}\right)-f_2\left(t+h,\rho(t)+\frac{X_n(t)}{\sqrt{n}}+\frac{\ln^2(n)}{n}\right)\right)\nonumber\\
&&+Y_nO(n^{-9})+\frac{Y_n^2\ln^4(n)}{n\left(1-(\rho(t)+\frac{X_n(t)}{\sqrt{n}})\right)}  \nonumber\\
&\leq & C\frac{Y_n^2\ln^4(n)}{n}.
\end{eqnarray}
 
The last inequality comes from the fact that for $t>t_{0}$, the denominator of  $f_2(t,\rho(t))$ i.e. ${1-t(1-\rho(t))}$ is away from 0, as well as the quantity $1-t(1-\frac{L_n(t)}{n})$ on the event $\mathcal{E}_n(t)$, and a fortiori $1-t(1-\frac{L_n(t)}{n}-a)$ for all $a>0$. Taking the expectation in the last upper bound, we obtain on $\mathcal{E}_n(t)$,

\begin{eqnarray}\label{lowerbound1'}
\Et\left[\min\left(\sum_{i=1}^{Y_n}M_i,n^3\right)-\Deltat(t)\right]\le  C \ln^4( n)(nh^2+h)+n^3\Pt(
\mathcal{A}^{c}_n(t)),
\end{eqnarray}
and therefore
\begin{eqnarray}\label{mom13}
\Et\left[\sum_{i=1}^{Y_n}M_i-\Deltat(t)\right]\le   C(\ln^4(n)(nh^2+h)+n^{-8})+n^3\Pt(\mathcal{A}^{c}_n(t)).
\end{eqnarray}
Recalling \eqref{mom11}, we obtain, using Lemmas \ref{lem-behrisch} and \ref{boundan}
\begin{equation}\label{deltaX1}
\left|\Et(\Delta X_n(t)) - ha(t, X_n(t))\right|\leq r_{1,n}(h,X_n(t))
\end{equation}
with $r_{1,n}(h,X_n(t))\leq C\left(\ln^4 (n)(\sqrt{n}h^2+\frac{h}{\sqrt{n}})+ \frac{X^2_n(t) h}{\sqrt n}+n^{-7.5}\right)\un_{\mathcal{E}_n(t)}+\sqrt{n}\un_{\mathcal{E}^c_n(t)}$. 
\end{proof}

\paragraph*{Second moment.}
 We now turn to the estimation of $\Et(\Delta X_n(t)^2).$ We are going to prove
 \begin{lem}\label{lem:deltaX2}
For $x\in \RR$, $t>1$, $h>0$ and $n\in\NN^*$, set 
\begin{eqnarray*}
&&b(t)= \frac{{\rho(t)(1-\rho(t))}}{(1-t(1-\rho(t)))^{3}} \text{ and }
r_{2,n}(h,x) = \left(hn^{-0.3}+n^{-7.5}\right)\un_{\{|x|\leq  n^{0.2}\}}+n\un_{\{|x|> n^{0.2}\}}.
\end{eqnarray*} 
 
 There exists a constant $C>0$, such that for $n>t_1$, $h\leq n^{-1}$ and $t\in [t_0,t_1]$ 
\begin{equation}\label{deltaX2}
\left| \Et[(\Delta X_n(t))^2] - hb(t)\right| \leq C r_{2,n}(h,X_n(t)).
\end{equation}
\end{lem}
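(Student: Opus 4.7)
My plan is to parallel the proof of Lemma~\ref{lem:deltaX} for the first moment, but now applied to the second moment. Writing $\Et[(\Delta X_n(t))^2] = \tfrac{1}{n}\var_t(\Delta L_n(t)) + (\Et[\Delta X_n(t)])^2$ separates the problem. The second piece is already controlled by Lemma~\ref{lem:deltaX}: on $\mc{E}_n(t)$, using $|a(t,x)|\leq C|x|$, $h\leq n^{-1}$ and $|X_n(t)|\leq n^{0.2}$, one has $(\Et[\Delta X_n(t)])^2\leq 2h^2 a(t,X_n(t))^2 + 2C^2 r_{1,n}(h,X_n(t))^2 = O(hn^{-0.3})$, which fits into $r_{2,n}$. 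Off $\mc{E}_n(t)$, the crude bounds $|\Delta L_n(t)|\leq n$ and $n|\rho(t+h)-\rho(t)|=O(1)$ give $(\Delta X_n(t))^2 = O(n)$, absorbed into the $n\un_{\{|x|>n^{0.2}\}}$ term of $r_{2,n}$. The real work is therefore to show $\var_t(\Delta L_n(t))/n = hb(t)+O(hn^{-0.3})$ on $\mc{E}_n(t)$.

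Lemma~\ref{lem-behrisch} with $k=2$ replaces $\Delta L_n$ by $\Deltat$ at the price of $O(n^{-7})$, negligible once divided by $n$. By Lemma~\ref{encadrement}, $\Deltat \leq \sum_{i=1}^{Y_n} M_i$, where conditionally on $L_n(t)$ the $M_i$ are iid copies of the total progeny of a subcritical $\BGW(n-L_n(t),(t+h)/n)$, independent of $Y_n\sim\Bin(L_n(t)(n-L_n(t)),h/(n-t))$. The Wald-type identity
\[
\var_t\!\Bigl(\textstyle\sum_{i=1}^{Y_n} M_i\Bigr) = \Et[Y_n]\var(M_1) + \var_t(Y_n)(\E[M_1])^2,
\]
the classical formulas $\E[M_1]=1/(1-m_n)$ and $\var(M_1)=m_n(1-(t+h)/n)/(1-m_n)^3$ with $m_n=(n-L_n(t))(t+h)/n=\lambda(t)+O(h+X_n(t)/\sqrt{n})$, and Taylor expansion around $X_n(t)=0$ yield, on $\mc{E}_n(t)$,
\[
\tfrac{1}{n}\var_t\!\bigl(\textstyle\sum M_i\bigr) = h\rho(t)(1-\rho(t))\,\frac{\lambda(t)+(1-\lambda(t))}{(1-\lambda(t))^3} + O(hn^{-0.3}) = hb(t)+O(hn^{-0.3}),
\]
the key cancellation being $\lambda(t)+(1-\lambda(t))=1$, which combines the two variance contributions into the cubic denominator of $b(t)$.

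It remains to transfer this variance from $\sum M_i$ to $\Deltat$, i.e.\ to show that $D := \sum_{i=1}^{Y_n} M_i - \Deltat\geq 0$ contributes negligibly. Cauchy--Schwarz gives $0\leq \Et[(\sum M_i)^2-\Deltat^2] = 2\Et[D\sum M_i] - \Et[D^2] \leq 2\sqrt{\Et[D^2]\,\Et[(\sum M_i)^2]}$, so it suffices to obtain $\Et[D^2]=o(h)$. On the event $\mc{A}_n(t)$ (complementary probability $O(n^{-10})$ by Lemma~\ref{boundan}), I decompose $D$ into contributions from indices in $\mc{R}$ (where $C^\star_{x_i}=\emptyset$) and from the per-index gap $M_i - |\tilde C_{x_i}|$, controlling each piece respectively via the stochastic domination of $|\mc{R}|\un_{\mc{A}_n(t)}$ in Lemma~\ref{encadrement}\ref{encadrement:A} (combined with finite exponential moments of $M_1$) and via the stochastic domination of Lemma~\ref{encadrement}\ref{encadrement:C} (the per-index gap is that of two BGW total progenies on populations differing by $\ln^2(n)$). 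On $\mc{A}_n^c(t)$, a crude Cauchy--Schwarz with $\sum M_i\leq n$ and $\Pt(\mc{A}_n^c(t))=O(n^{-10})$ suffices.

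The main obstacle is this last step: the proof of Lemma~\ref{lem:deltaX} only yielded an $L^1$ estimate of $D$, whereas matching the $r_{2,n}$ error budget now demands an $L^2$ estimate. Achieving it forces a finer use of the couplings in Lemma~\ref{encadrement}\ref{encadrement:A} and~\ref{encadrement:C}, exploiting crucially that under the hypothesis $h\leq n^{-1}$ the conditional mean $\Et[Y_n]=O(nh)=O(1)$, so that $\sum M_i$ has bounded moments of every fixed order and the discrepancies producing $D$ are both rare and small.
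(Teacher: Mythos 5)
Your decomposition $\Et[(\Delta X_n)^2]=\tfrac1n\var_t(\Delta L_n)+(\Et[\Delta X_n])^2$ is algebraically equivalent to the paper's direct expansion, and your use of the Wald-type identity for $\var_t(\sum M_i)$ together with the observation $\lambda+(1-\lambda)=1$ coincides with the paper's main computation. The treatment of $(\Et[\Delta X_n])^2$ and of the complementary event $\mc E_n^c(t)$ is also fine. So up to the transfer step your route is essentially the paper's.

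The genuine gap is precisely the step you flag yourself: you never actually establish the second-moment control on $D=\sum_{i\le Y_n}M_i-\Deltat(t)$, you only describe the ingredients that ``should'' give it. Worse, the target you set, $\Et[D^2]=o(h)$, is both unnecessary and almost certainly false: by the same heuristics you invoke, $D$ is typically $0$ and of size $O(\ln^2 n)$ on an event of probability $\Theta(h\,\mathrm{polylog}(n))$, so $\Et[D^2]\asymp h\,\mathrm{polylog}(n)$, which is \emph{not} $o(h)$. A weaker bound $\Et[D^2]=O(h\,\mathrm{polylog}(n))$ would in fact suffice for your Cauchy--Schwarz step (since it still yields $\sqrt{\Et[D^2]\,\Et[(\sum M_i)^2]}=O(h\sqrt n\,\mathrm{polylog}(n))=o(hn^{0.7})$), but none of this is carried out, and carrying it out forces you to control second moments of $|\mc R|$ and cross-terms in $\sum_i(M_i-|C_{x_i}|)$, none of which Lemma~\ref{encadrement} as stated hands you directly.

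The paper avoids the $L^2$ estimate entirely by a truncation device: on a high-probability event it replaces $\sum M_i$ by $\min(\sum M_i,n^{\theta})$ and uses the elementary inequality
\[
\min\Bigl(\sum_i M_i,n^{\theta}\Bigr)^{2}-\Deltat(t)^2\;\le\;2\,n^{\theta}\Bigl(\sum_i M_i-\Deltat(t)\Bigr),
\]
so that the already-established $L^1$ bound $\Et[\sum M_i-\Deltat(t)]=O(h\ln^4 n+n^{-8})$ from the first-moment proof (equation \eqref{mom13}) can be reused as is, with $\theta=\tfrac12$ matching the $n^{-0.3}h$ error budget after division by $n$. Replacing your Cauchy--Schwarz step with this truncation argument would close the gap without any new moment estimates.
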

\begin{proof}
By definition, $n\Et[\Delta X_n(t)^2]$ is equal to 
\begin{multline*}
 \Et[\Delta L^{2}_n(t)] +n^2(\rho(t+h)-\rho(t))^2-2n\Delta L_n(t)(\rho(t+h)-\rho(t))]\\
=\Et[\Delta L^{2}_n(t)] -n^2(\rho(t+h)-\rho(t))^2-2n^{3/2}\left(\rho(t+h)-\rho(t)\right)\Et[\Delta X_n(t)].
\end{multline*}

As before we make use of Lemma \ref{lem-behrisch} to compute $\Et[\Delta L^{2}_n(t)]$: 
\[\Et[\Delta L^{2}_n(t)]=\Et[\Deltat^2(t)]+O(n^{-7}).\]

 From now on, the computations will be performed only on  the event $\mathcal{E}_n(t)$.   
Using Lemma \ref{encadrement}, the term $\Et[\Deltat^{2}(t)]$ is bounded from above by the second moment of the sum of $Y_n$ i.i.d. copies of the total progeny of a $\BGW(n-\Lt(t),\frac{t+h}{n})$. Hence, denoting by $v_n^+$ the variance of the total progeny of a $\BGW(n-\Lt(t),\frac{t+h}{n})$, classical variance computations give, 
\begin{eqnarray}
\label{eq-majcarre}
\Et[(\Deltat(t))^2]&\le& \Et\left[\left(\sum_{i=1}^{Y_n}M_i\right)^2\right]\\
&=& (\Et[{Y_n}]m_{n,1}^+)^2+ \var(Y_n\mid \mc{F}_t)(m_{n,1}^+)^2+\Et[Y_n]v_n^+.\nonumber
\end{eqnarray}
In the right-hand side of \eqref{eq-majcarre}, the second and third terms will give the main contribution and yield the diffusion term $nhb(t,X_t)$. Since the expectation and variance of the total progeny of a subcritical $\BGW(n,p)$ process are equal to $\frac{1}{1-np}$ and $\frac{np(1-p)}{(1-np)^3}$ respectively, we can write using the function $f_3(t,x)=\frac{x(1-x)}{(1-t(1-x))^3}$,
\begin{align}\label{uppervar}\var(Y_n&\mid  \mc{F}_t)(m_{n,1}^+)^2+\Et[Y_n]v_n^+\nonumber\\ =&\Lt(t)(n-\Lt(t))\frac{h}{n-t}\left(1-\frac{h}{n-t}\right)\frac{1}{\left(1-(n-\Lt(t))\frac{t+h}{n}\right)^2}\nonumber\\
&+\Lt(t)(n-\Lt(t))\frac{h}{n-t}\frac{(n-\Lt(t))\frac{t+h}{n}(1-\frac{t+h}{n})}{\left(1-(n-\Lt(t))\frac{t+h}{n}\right)^3}\nonumber\\
=&nhf_3\left(t+h,\frac{\Lt(t)}{n}\right)\left(1-\frac{h}{n-t}-\frac{t}{n}(1-\frac{h}{n-t})(1-\frac{\Lt(t)}{n})(t+h)\right)\nonumber\\
=&nh\left(f_3(t,\rho(t))+O\left(\frac{|X_n(t)|}{\sqrt{n}}\left|\frac{\partial f_3}{\partial x}(t,\rho(t))\right|+h\left|\frac{\partial f_3}{\partial t}(t,\rho(t))\right|\right)\right)\left(1+O(\frac{1}{n})\right)\nonumber\\
=&nh\left(f_3(t,\rho(t))+O\left(\frac{|X_n(t)|}{\sqrt{n}}+h+\frac{1}{n}\right)\right).
\end{align}

The other contributions in the estimation of the upper bound of $n\Et[\Delta X_n(t)^2]$ are the three following terms
  \[  (\Et[{Y_n}]m_{n,1}^+)^2-n^2(\rho(t+h)-\rho(t))^2-2n^{3/2}(\rho(t+h)-\rho(t))\Et[\Delta X_n(t)].\]
Since $\Et[{Y_n}]m_{n,1}^+=nh f_1(t+h, \frac{\Lt(t)}{n})(1+O(\frac{1}{n}))$ and $\rho'(t)=f_1(t,\rho(t))$, the  zero orders of the first two terms cancel:

\begin{equation} \label{uppervar2}
 (\Et[{Y_n}]m_{n,1}^+)^2-n^2(\rho(t+h)-\rho(t))^2=n^2h^2 O\left(h+\frac{1}{n}+\frac{|X_n(t)|}{\sqrt{n}}\right). 
\end{equation}
Therefore,  Lemma \ref{lem:deltaX}, giving the expression of $\Et[\Delta X_n(t)]$ together with \eqref{uppervar} and \eqref{uppervar2} yield the following upper bound for $\Et[\Delta X_n(t)^2]$:
\begin{eqnarray*}
\Et[\Delta X_n(t)^2]&\leq& hf_3(t,\rho(t))+C\left((h^2+nh^3)\ln^4(n)+\frac{h}{n}+n^{-8}\right)\\
&&+C\left(\left(\frac{h}{\sqrt{n}}+\sqrt{n}h^2\right)|X_n(t)|+h^2|X_n(t)|^2\right)\un_{\mc{E}_n(t)}+n\un_{\mathcal{E}^{c}_n(t)}.
\end{eqnarray*}
Since we based the upper bound of $\Et[\Delta X_n(t)^2]$ on the inequality \[\Et[\Deltat (t)^2]\leq \Et\left[\left(\sum_{i=1}^{Y_n}M_i\right)^2\right],\] we just have to find an upper bound of
$\Et\left[\left(\sum_{i=1}^{Y_n}M_i\right)^2-(\Deltat(t))^2\right]$
in order to get a lower bound of $\Et[\Delta X_n(t)^2]$. 
 We proceed as in \eqref{lowerbound1} with a slight modification. Introducing the event \[\mc{D}_n(t):=\left\{\sum_{i=1}^{Y_n}M_i>n^{\theta}\right\}\] for some $0<\theta<1$,
\begin{eqnarray}
\Et\left[\left(\sum_{i=1}^{Y_n}M_i\right)^2-\Deltat(t)^2\right]&=&\Et\left[\left(\left(\sum_{i=1}^{Y_n}M_i\right)^2-n^{2\theta}\right)\un_{\mc{D}_n(t)}\right]\nonumber\\
&&+\Et\left[\min\left(\sum_{i=1}^{Y_n}M_i,n^\theta\right)^2-\Deltat(t)^2\right].\nonumber
\end{eqnarray}

Using Cauchy--Schwarz inequality and the fact that $Y_n$ is bounded by $n^2$, the first term is smaller than $Cn^4\Pt(\mc{D}_n(t))^\frac{1}{2}$. 

When $h\leq \frac{1}{n}$, $\Pt(\mc{D}_n(t))\leq \Pt(\tilde{\mc{A}}_{n}^{c}(t))+\Pt(Y_n\geq n^{\theta/2})\leq Cn^{-20}$ . Indeed,   by Lemma \ref{boundan}, $\Pt(\tilde{\mc{A}}_{n}^{c}(t))=O(n^{-20})$. Moreover,  since the conditional distribution of $Y_n$ given $\mc{F}_t$ follows the binomial $\Bin(\Lt(t)(n-\Lt(t)),\frac{h}{n-t})$ distribution, it is stochastically dominated by the binomial $\Bin(\frac{n^2}{4},\frac{h}{n-t_1})$ distribution.  Therefore, if $h\leq \frac{1}{n}$, $\Pt(Y_n\geq n^{\theta/2})=O(n^{-20})$ by application of the exponential Markov's inequality.

 To  bound from above the second term we note that,
\begin{align*}
\min&\left(\sum_{i=1}^{Y_n}M_i, n^\theta\right)^2-(\Deltat(t))^2\\
&\le\left(\min\left(\sum_{i=1}^{Y_n}M_i,n^\theta\right)^2-(\Deltat(t))^2\right)\un_{\Deltat(t)<n^\theta}\\
&\le\left(\min\left(\sum_{i=1}^{Y_n}M_i, n^\theta\right)+\Deltat(t)\right)\left(\min\left(\sum_{i=1}^{Y_n}M_i,n^\theta\right)-\Deltat(t)\right)\un_{\Deltat(t)<n^\theta}\\
&\leq2n^\theta\left(\min\left(\sum_{i=1}^{Y_n}M_i,n^\theta\right)-\Deltat(t)\right)\un_{\Deltat(t)<n^\theta}\\
&\leq2n^\theta\left(\sum_{i=1}^{Y_n}M_i-\Deltat(t)\right).
\end{align*}

Using equation \eqref{mom13} to bound 
$
\Et\left[\sum_{i=1}^{Y_n}M_i-\Deltat(t)\right]$, 
we obtain that if $h\leq 1/n$, 
\[\Et\left[\left(\sum_{i=1}^{Y_n}M_i\right)^2-\Deltat(t)^2\right]
\le   C(n^{\theta}\ln^4(n)h+n^{-7+\theta}).\] 

Since $b(t)=f_3(t,\rho(t))=\dfrac{\rho(t)(1-\rho(t))}{(1-t(1-\rho(t)))^3}$,  gathering all the above estimations we obtain for $h \leq \frac1n$, 
\[
\left|\Et(\Delta X_n(t)^2) - hb(t)\right|\leq C\left(\frac{|x|}{\sqrt{n}}h+|x|^2h^2 +n^{\theta-1}\ln^{4}(n)h+n^{-8+\theta}\right)\un_{|x|\leq n^{0.2}}+n\un_{|x|>n^{0.2}}
\]
Taking $\theta=\frac12$ yields for $h\leq \frac{1}{n}$, \[\left|\Et(\Delta X_n(t)^2) - hb(t)\right|\leq Cr_{2,n}(h,X_n(t))\] with  
$r_{2,n}(h,x)=(n^{-0.3}h+n^{-7.5})\un_{|x|\leq n^{0.2}}+n\un_{|x|>n^{0.2}}$.

\end{proof}
\paragraph*{Third moment.} 
For the third moment, we only need to obtain an upper bound on
$\Et[|\Delta X_n(t)|^3]$. 

\begin{lem}\label{lem:deltaX3}
There exists a constant $C>0$, such that for every integer $n>t_1$, reals $t\in [t_0,t_1]$ and $0<h\leq n^{-1}$,  
\begin{equation}\label{deltaX3}
\Et(|\Delta X_n(t)|^3) \leq C r_{3,n}(h,X_n(t))
\end{equation}
where  
\begin{eqnarray*}
r_{3,n}(h,x)&= & hn^{-\frac12}(\ln n)^6+n^{-7.5}+n^\frac{3}{2}\un_{\{|x|> n^{0.2}\}}.
\end{eqnarray*}

\end{lem}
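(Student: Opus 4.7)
The plan is to adapt the strategy of the two preceding proofs, but using only one-sided upper bounds since we need just an upper bound for the third absolute moment. Writing $\Delta X_n(t) = \Delta L_n(t)/\sqrt{n} - \sqrt{n}(\rho(t+h)-\rho(t))$ and using $|a-b|^3 \leq C(|a|^3+|b|^3)$, I obtain
\[
|\Delta X_n(t)|^3 \;\leq\; C\,\frac{(\Delta L_n(t))^3}{n^{3/2}} \;+\; C\, n^{3/2}|\rho(t+h)-\rho(t)|^3,
\]
where I used that $\Delta L_n(t)\geq 0$ (the order of the largest component is non-decreasing in $t$). Since $\rho$ is smooth on $[t_0,t_1]$, the deterministic term is at most $C n^{3/2} h^3 \leq C h/\sqrt{n}$ when $h\leq 1/n$; the task thus reduces to bounding $\Et[(\Delta L_n(t))^3]$.

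On $\mc{E}_n^c(t)=\{|X_n(t)|>n^{0.2}\}$ I would use the deterministic bound $|\Delta X_n(t)|\leq 2\sqrt{n}$ (which follows from $|X_n(s)|\leq \sqrt{n}$), giving $|\Delta X_n(t)|^3\leq 8\,n^{3/2}$ and accounting for the $n^{3/2}\un_{\{|x|>n^{0.2}\}}$ term of $r_{3,n}$. On $\mc{E}_n(t)$, Lemma \ref{lem-behrisch} applied with $k=3$ yields $|\Et[(\Delta L_n(t))^3]-\Et[(\Deltat(t))^3]|\leq Cn^{-6}$, so everything reduces to bounding $\Et[(\Deltat(t))^3]$ on $\mc{E}_n(t)$.

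Invoking Lemma \ref{encadrement}(a), $\Deltat(t)\leq \min(\sum_{i=1}^{Y_n} M_i, n)$, with $(M_i)_i$ i.i.d.\ copies of the total progeny of a subcritical $\BGW(n-L_n(t),(t+h)/n)$, independent of $Y_n$. I would then decompose according to $\tilde{\mc{A}}_n(t)=\{M_i \leq \ln^2(n),\;\forall\,1\leq i\leq n^2\}$, whose complement satisfies $\Pt(\tilde{\mc{A}}_n^c(t)) = O(n^{-20})$ (proved in Appendix~\ref{rareevent}). On $\tilde{\mc{A}}_n(t)$, since $Y_n \leq n^2$, one has $\sum_{i=1}^{Y_n} M_i \leq Y_n \ln^2(n)$, hence $(\Deltat(t))^3 \leq Y_n^3 (\ln n)^6$; off $\tilde{\mc{A}}_n(t)$ the trivial bound $\Deltat(t)\leq n$ contributes at most $n^3\Pt(\tilde{\mc{A}}_n^c(t)) = O(n^{-17})$. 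Since $Y_n$ is conditionally $\Bin(L_n(t)(n-L_n(t)),h/(n-t))$ with mean $\mu=O(nh)=O(1)$ for $h\leq 1/n$, the identity $Y_n^3=Y_n(Y_n-1)(Y_n-2)+3Y_n(Y_n-1)+Y_n$ together with the factorial-moment bound $\Et[Y_n(Y_n-1)\cdots(Y_n-k+1)] \leq \mu^k$ give $\Et[Y_n^3]=O(\mu)=O(nh)$.

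Combining these estimates yields $\Et[(\Deltat(t))^3] \leq C\, nh(\ln n)^6 + O(n^{-17})$ on $\mc{E}_n(t)$, which after division by $n^{3/2}$ contributes $C\,h(\ln n)^6/\sqrt{n} + O(n^{-8.5})$, absorbable into $hn^{-1/2}(\ln n)^6 + n^{-7.5}$ as required. The $(\ln n)^6$ factor is precisely the price paid for the crude truncation $M_i\leq \ln^2(n)$, which avoids any finer computation of $\E[M_i^3]$ and is amply sufficient for the sequel. The argument is structurally simpler than for the second moment since no leading-order term has to be matched; the main care lies in the bookkeeping of the decompositions $\mc{E}_n(t)$ versus $\mc{E}_n^c(t)$ and $\tilde{\mc{A}}_n(t)$ versus $\tilde{\mc{A}}_n^c(t)$, together with the use of the independence of $Y_n$ and $(M_i)_{i\geq 1}$ when estimating $\Et[Y_n^3]$.
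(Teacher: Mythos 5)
Your proof is correct and follows essentially the same route as the paper: reduce to $\Deltat(t)$ via Lemma~\ref{lem-behrisch}, use a good event to get the pointwise bound $\Deltat(t)\leq Y_n\ln^2(n)$, control $\Et[Y_n^3]=O(nh)$ via moments of the conditionally binomial $Y_n$, and handle $\mc{E}_n^c(t)$ by the crude bound $|\Delta X_n|\leq 2\sqrt n$. The only cosmetic differences are that you use the inequality $|a-b|^3\leq C(|a|^3+|b|^3)$ rather than the paper's full binomial expansion of $\bigl(\Delta L_n(t)-n\Delta\rho\bigr)^3/n^{3/2}$, you truncate with $\tilde{\mc{A}}_n(t)$ (on the $M_i$) instead of $\mc{A}_n(t)$ (on the component sizes), and you compute $\Et[Y_n^3]$ via factorial moments rather than stochastic domination by a fixed binomial; all three choices are legitimate and controlled by the same Appendix~\ref{rareevent} estimates.
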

\begin{proof}
As before, we first restrict the study to the event $\mathcal{E}_n(t)$. On this event, 
\begin{alignat*}{2}
    \Et[|\Delta X_n(t)|^3] = \Et&\left[\mid  n^{-\frac32}\Delta L_n(t)^3-3n^{-\frac12}\Delta L_n(t)^2(\rho(t+h)-\rho(t))\right.\\ &\left.+3n^{\frac12}\Delta L_n(t)(\rho(t+h)-\rho(t))^2 -n^{\frac32}(\rho(t+h)-\rho(t))^3\mid\right].
\end{alignat*}
Using again Lemma \ref{lem-behrisch}, and using the fact that on $\mathcal{A}_{n}(t)$, $|\Deltat(t)|\leq(\ln n)^2 Y_n,$ we have on $\mathcal{E}_n(t)$:
\begin{eqnarray*}
   \Et[|\Delta X_n(t)|^3] &\leq & n^{-\frac32}\ln(n)^6 \Et[Y^{3}_n]+3n^{-\frac12}\ln(n)^4\Et[Y^{2}_n]|\rho(t+h)-\rho(t)|\\
    && +3n^{\frac12}\ln(n)^2\Et[Y_n](\rho(t+h)-\rho(t))^2+n^{\frac32}|\rho(t+h)-\rho(t)|^3\\
    && +n^{\frac32}\Pt(\mathcal{A}_{n}^c(t))(1+O(h))+O(n^{-7.5}).
\end{eqnarray*}

As the conditional distribution of $Y_n$ is stochastically dominated by $\Bin(\frac{n^2}{4},\frac{h}{n-t_1})$, there exists a constant $C$ such that for  $t_0\leq t\leq t_1$ and $h\leq \frac{1}{n}$,  $\Et[Y^{k}_n]\leq C n h$ for $k\in\{1,2,3\}$. Therefore, there exists a constant $C$ such that for  $t_0\leq t\leq t_1$ and $h\leq \frac{1}{n}$, 
\[\Et[|\Delta X_n(t)|^3]\leq C (hn^{-\frac12}\ln^{6}(n)+n^{-7.5})+n^{\frac32}\un_{\mathcal{E}^{c}_n(t)}.\] 
\end{proof}

\subsection{Assumptions of Proposition \ref{normanth1} for the sequence \texorpdfstring{$Z_n$}{Zn}}\label{sub-moments-aux}

This section is devoted to checking that the sequence of processes $Z_n:=(uX_n)\circ v^{(-1)}$, satisfies the assumptions of Proposition \ref{normanth1} where the process $Z$ is nothing but the standard Brownian motion. 
For this purpose we want to check these assumptions with $\alpha\equiv 0$, $\beta\equiv 1$, and take the origin of time at $v(t_0)$. 
In this case, the remainder functions are $\epsilon_1\equiv 0$, $\epsilon_2\equiv 0$ and $\epsilon_3\equiv \frac{4}{\sqrt {2\pi}} h^{3/2}$.

In this setting, checking Assumption \hyperlink{hypA234}{A2} is straightforward, and the sum in  Assumption \hyperlink{hypA234}{A3} converges to 0 since it is of order $N_n^{-1/2}$. The convergence of $X_n(t_0)$ to a centered Gaussian variable with variance $\sigma^{2}(t_0)=\frac{\rho(t_0)(1-\rho(t_0))}{(1-t_0(1-\rho(t_0)))^2}=\frac{v(t_0)}{u(t_0)^2}$ is a consequence of the classical central limit Theorem for the order of the giant component (see \cite{BBDLV,Pittel,Stepanov}). As $v$ is an increasing smooth function from $(1,+\infty)$ to $(0,+\infty)$ and $u$ is a smooth function on $(1,+\infty)$ that is never equal to zero, this implies that $Z_n(v(t_0))$ converges to a centrered Gaussian random variable with variance $v(t_0)$, which implies Assumption \hyperlink{hypA234}{A4}. 

Let us now prove Assumption \hyperlink{hypA1}{A1}, concerning the first three moments of the increment of $Z_n$.
The estimations of the increment of $Z_n$ are based on the identity:
\begin{eqnarray}\label{deltaZ}
\Delta Z_n(t)&:= &Z_n(t+h)-Z_n(t)\nonumber\\
&=& u\circ v^{(-1)}(t+h) \left(X_n\circ v^{(-1)}(t+h)-X_n\circ v^{(-1)}(t)\right)\nonumber\\
&&+X_n\circ v^{(-1)}(t)\left(u\circ v^{(-1)}(t+h)-u\circ v^{(-1)}(t)\right)\nonumber\\
&=& (u\circ v^{(-1)}(t)+O(h)) \left(X_n\circ v^{(-1)}(t+h)-X_n\circ v^{(-1)}(t)\right)\nonumber\\
&&+X_n\circ v^{(-1)}(t)\left(h(u\circ v^{(-1)})'(t)+O(h^2)\right)
\end{eqnarray}
\paragraph*{The first moment of $\Delta Z_n$.} 
By Lemma \ref{lem:deltaX} and the smoothness of $v^{(-1)}$, there exists a positive constant  $C$ such that for every $t\in [v(t_0),v(t_1)]$, $n>t_1$  and $h\leq \frac{C}{n}$, 
\begin{multline*}
\Ev\left[X_n\circ v^{(-1)}(t+h)-X_n\circ v^{(-1)}(t)\right]\\=\left(\frac{h}{v'\circ v^{(-1)}(t)}+O(h^2)\right)a(v^{(-1)}(t),X_n\circ v^{(-1)}(t))\\
+O\left(hn^{-0.1}+n^{-7.5}+\sqrt{n}\un_{|X_n\circ v^{(-1)}(t)|>n^{-0.2}}\right).
\end{multline*}
Noting that $a(t,x)=-\frac{u'(t)}{u(t)}x$, we have:
\[(u\circ v^{(-1)})(t) \frac{a(v^{(-1)}(t),X_n\circ v^{(-1)}(t))}{v'\circ v^{(-1)}(t)}+X_n\circ v^{(-1)}(t)(u\circ v^{(-1)})'(t)=0.\]
Hence, the expectation of the zero order term of \eqref{deltaZ} cancels. 
We obtain that if $n>t_1$, $h\leq \frac{C}{n}$, and $t\in [v(t_0),v(t_1)]$ then
\begin{eqnarray}\label{deltaZ1}
\Ev\left[\Delta Z_n(t)\right]
&=& O\left(h^2 |X_n\circ v^{(-1)}(t)|+hn^{-0.1}+n^{-7.5}+\sqrt{n}\un_{|X_n\circ v^{(-1)}(t)|>n^{-0.2}}\right).\nonumber\\
&=&O(hn^{-0.1} +n^{-7.5}+\sqrt{n}\un_{|X_n\circ v^{(-1)}(t)|>n^{0.2}}).
\end{eqnarray}
\paragraph*{The second moment of $\Delta Z_n$.} 
Let $t\in[v(t_0),v(t_1)]$.
 By formula \eqref{deltaZ},
\begin{eqnarray}
\Ev\left[(\Delta Z_n(t))^2\right]
&=& (u\circ v^{(-1)}(t)+O(h))^2\Ev\left[(\Delta (X_n\circ v^{(-1)})(t))^2\right] \nonumber\\
&&+(X_n\circ v^{(-1)}(t))^2\left(h(u\circ v^{(-1)})'(t)+O(h^2)\right)^2\nonumber\\
&&+O(h)X_n\circ v^{(-1)}(t)\Ev\left[\Delta (X_n\circ v^{(-1)})(t)\right].\nonumber
\end{eqnarray}
 By  Lemma \ref{lem:deltaX}, there exists a positive constant $C$ such that if $h\leq \frac{C}{n}$ then, on the event $\mathcal{E}_n(v^{-1}(t))=\{|X_n\circ v^{(-1)}(t)|\leq n^{0.2}\}$, 
\[\left|\Ev\left[\Delta (X_n\circ v^{(-1)})(t)\right]\right|=O\left(h|X_n\circ v^{(-1)}(t)|+hn^{-0.1}+n^{-7.5}\right).\]
 Hence taking $h\leq \frac{C}{n}$,  the last two terms on the right-hand side are, on the event $\mathcal{E}_n(v^{(-1)}(t))$, 
\[O\left(h^2(X_n\circ v^{(-1)}(t))^2+h|X_n\circ v^{(-1)}(t)|(hn^{-0.1}+n^{-7.5})\right)=O(h^2n^{0.4}+hn^{-7.3}).\]
Moreover, by Lemma \ref{lem:deltaX2}, for $h\leq \frac{C}{n}$,  on the event $\mathcal{E}_n(v^{(-1)}(t))$,
\begin{eqnarray*}
\Ev\left[(\Delta (X_n\circ v^{(-1)})(t))^2\right]&=&
\left(\frac{h}{v'\circ v^{(-1)}(t)}+O(h^2)\right)b(v^{(-1)}(t))\nonumber\\
&&+O\left(hn^{-0.3}+n^{-7.5}\right).
\end{eqnarray*}
Noticing that $b=v'/u^2$, we obtain that, on the event $\mathcal{E}_n(v^{(-1)}(t))$,
\[(u\circ v^{(-1)}(t)+O(h))^2\Ev\left[(\Delta (X_n\circ v^{(-1)})(t))^2\right]=h+O\left(h^2+hn^{-0.3}+n^{-7.5}\right).\]
Therefore, for $h\leq \frac{C}{n}$, 
\begin{equation}\label{deltaZ2}
\Ev\left[(\Delta Z_n(t))^2\right]=h+O(hn^{-0.3}+n^{-7.5}+n\un_{|X_n\circ v^{(-1)}(t)|>n^{0.2}}).
\end{equation}

\paragraph*{The third moment of $\Delta Z_n$.} 
By equation \eqref{deltaZ}, 
\[
|\Delta Z_n(t))|=O\left(|\Delta X_n\circ v^{(-1)}(t)|+h|X_n\circ v^{(-1)}(t)|\right).
\]
We bound separately the  conditional expectation of the four terms obtained in the expansion of the cube on the right-hand side using Lemmas  \ref{lem:deltaX2} and \ref{lem:deltaX3} together with the simple estimation 
$\Ev\left[|\Delta X_n\circ v^{(-1)}(t)|\right]=O(\sqrt{n})$: there exists a positive constant $C$ such that on the event $\mathcal{E}_n(v^{(-1)}(t))$, if $h\leq \frac{C}{n}$ then 
\begin{eqnarray*}
\Ev\left[|\Delta Z_n(t)|^3\right]&=& O(hn^{-0.4}+n^{-7.5})+O(h|X_n\circ v^{(-1)}(t)|(h+n^{-7.5}))\\ 
&&+O(\sqrt{n}h^2(X_n\circ v^{(-1)}(t))^2)+O(h^3|X_n\circ v^{(-1)}(t)|^3)\\
&=&O(hn^{-0.1}+n^{-7.5}). 
\end{eqnarray*}
Therefore, for $h\leq \frac{C}{n}$, 
\begin{equation}\label{deltaZ3}
\Ev\left[|\Delta Z_n(t)|^3\right]=O(hn^{-0.1}+n^{-7.5}+n^{3/2}\un_{|X_n\circ v^{(-1)}(t)|>n^{0.2}}). 
\end{equation}

\paragraph*{Assumption A1.} 
We now use  the previous estimates \eqref{deltaZ1}, \eqref{deltaZ2} and \eqref{deltaZ3} to check Assumption \hyperlink{hypA1}{A1} of Proposition~\ref{normanth1}.
We have to show that there exists a sequence of positive integers $(N_n)_n$ that converge to $+\infty$ such that, for any $0<s<t$~,
\[\sum_{k=0}^{N_n-1}\sum_{i=1}^{3}\E\left(\left|e_{i,n}\left(s+k\frac{t-s}{N_n},\frac{t-s}{N_n},Z_{n}(v(t_0)+s+k\frac{t-s}{N_n})\right)\right|\right)\underset{n\rightarrow +\infty}{\rightarrow} 0,\]
where, for some positive constants $C_1$, $C_2$ depending only on $t_0$ and $t$, whenever $n>C_1$,  $n^{-7}\leq h\leq  n^{-1.1}$,   $0\leq\tau\leq t$ and $i\in\{1,2,3\}$
\[e_{i,n}\left(\tau,h,z\right)=C_2(hn^{-0.1}+n^{i/2}\un_{|z|>(u\circ v^{(-1)}(v(t_0)+\tau)n^{0.2}}).\]

Fixing $N_n=n^5$,  Assumption \hyperlink{hypA1}{A1} is true if we show that \[3n^{3/2}\sum_{k=0}^{N_n-1} \P\left(\left|X_n\circ v^{(-1)}\left(v(t_0)+s+k\frac{t-s}{N_n}\right)\right|>n^{0.2}\right)\underset{n\rightarrow +\infty}{\rightarrow} 0.\] This is a consequence of Lemma \ref{bounden}.

\section{Tightness\label{Tightness}}

Our approach to prove the tightness of $Z_n$ relies on the criterion by Billingsley (\cite{Billingsley}, Theorem 13.5).
Namely we have to prove that, there exist $\alpha>1$ and a constant $C>0$ such that for $v(t_0)\leq r<s<t\leq v(t_1)$  
\begin{equation}\label{tightness1}
\E\left[(Z_n(s)-Z_n(r))^2(Z_n(t)-Z_n(s))^2\right]\le C(t-r)^{\alpha}.
\end{equation}

We will treat separately the case where $t-r$ is very small comparatively to $n$ and when it is not. More precisely we will distinguish between  $t-r\le n^{-5}$ and $t-r> n^{-5}$.

\subsection*{The case $t-r \le n^{-5}$}
As $Z_n(t)=\Xmu(v^{(-1)}(t)),$ and $v$ and $v^{(-1)}$ are Lipschitz on the intervals we are considering, to prove \eqref{tightness1} for $t-r\le n^{-5}$ it suffices to prove for some constant $D$ that, for every $t_0\leq r<s<t\leq t_1$ such that $t-r\le Dn^{-5}$,
\begin{equation}
\E\left[\left(\Xmu(s)-\Xmu(r)\right)^2\left(\Xmu(t)-\Xmu(s)\right)^2\right]\le C(t-r)^{\alpha}.
\end{equation}
Since $\Xmu(s)-\Xmu(r)$ is equal to 
\[\frac{(u(s)-u(r))L_n(s)+u(r)(L_n(s)-L_n(r))-(u(s)\rho(s)-u(r)\rho(r))n}{\sqrt{n}},
\]
we have by straightforward calculation and by bounding $L_n$ by $n$: 
\begin{equation}\label{tightness2}
(\Xmu(s)-\Xmu(r))^2\le O(n(s-r)^2)+C\frac{(L_n(s)-L_n(r))^2}{n}.
\end{equation}

The same is evidently true for $(\Xmu(t)-\Xmu(s))^2$. Observing that for $t-r\le n^{-5}$, 
\begin{equation}\label{tightness3}
n(s-r)^2 \le n(t-r)^{0.9}(t-r)^{1.1}\le\frac{(t-r)^{1.1}}{n^{3.5}},
\end{equation} and bounding $L_n$ by $n$,  we are reduced to show that for some $\alpha>1$, 
\[\E\left[\frac{(L_n(s)-L_n(r))^2(L_n(t)-L_n(s))^2}{n^2}\right]\le C (t-r)^\alpha.\]
As  the difference $t-r$ is assumed very small, the term in the expectation will be zero with a very high probability. Indeed, for it to be non-zero it is necessary that one edge is drawn during each of the intervals $[r,s]$ and $[s,t]$. Brutally bounding $L_n(t)-L_n(r)$ by $n$ and noting that the number of edges drawn in two distinct intervals are negatively correlated, we have
\begin{multline}\label{tightness4}
\E\left[\frac{(L_n(s)-L_n(r))^2(L_n(t)-L_n(s))^2}{n^2}\right]\le n^2\P\left(\Bin(n^2,(t-r)/n)>0\right)^2\\=O(n^4(t-r)^2)=O(1)(t-r)^{1.1}.
\end{multline}

Hence \eqref{tightness1} holds.

\subsection*{The case $t-r > n^{-5}$}
In this case our strategy is to subdivide the interval $[r,t]$ so that we can apply our results from Section \ref{convergencefdd}. Thus, we take as increment $h=n^{-5}$.
Set $k=\lfloor\frac{t-s}{h}\rfloor$,  $s_i=s+ih$ for $0\leq i\leq k$ and  $s_{k+1}=t$. We construct similarly $r_i=r+ih$ for $0\leq i\leq \lfloor\frac{r-s}{h}\rfloor$.

We have
\begin{equation*}
\E[(Z_n(s)-Z_n(r))^2(Z_n(t)-Z_n(s))^2]=\E\left[(Z_n(s)-Z_n(r))^2 \Ev[s][(Z_n(t)-Z_n(s))^2]\right].
\end{equation*}

We first determine an upper bound for 
$\Ev[s][(Z_n(t)-Z_n(s))^2]$.

We write, for any function $g$, $\Delta_i g:=g(s_{i+1})-g(s_{i}).$
 The decomposition 
\[
Z_n(t)-Z_n(s)=\sum_{i=0}^{k}\left(\Delta_i Z_n-\Ev[s_i][\Delta_i Z_n]\right)+\sum_{i=0}^{k}\Ev[s_i][\Delta_i Z_n]
\]
implies
\[(Z_n(t)-Z_n(s))^2\leq 2 \left(\sum_{i=0}^{k}\left(\Delta_i Z_n-\Ev[s_i][\Delta_i Z_n]\right)\right)^2+ 2\left(\sum_{i=0}^{k}\Ev[s_i][\Delta_i Z_n]\right)^2.
\]

Thus, by orthogonality of the centered terms: 
\[
\Ev[s][(Z_n(t)-Z_n(s))^2]\leq 2(S_1+S_2).
\]
with 
\[S_1=\sum_{i=0}^{k}\Ev[s]\left[\left(\Delta_i Z_n-\Ev[s_i][\Delta_i Z_n]\right)^2\right]\]
and \[S_2=\Ev[s]\left[\left(\sum_{i=0}^{k}\Ev[s_i][\Delta_i Z_n]\right)^2\right].\]

To find an upper bound of $S_2$ we use \eqref{deltaZ1} and get
\begin{equation*}
\Ev[s_i][\Delta_i Z_n] = O(hn^{-0.1}+n^{-7.5}+\sqrt{n}\un_{|X_n\circ v^{(-1)}(s_i)|>n^{0.2}}).
\end{equation*}
Note that for $i=k,$ this estimate remains true when replacing $h$ by $s_{k+1}-s_k<h$.
Hence, recalling $h=n^{-5}$
\[\left(\sum_{i=0}^{k}\Ev[s_i](\Delta_i Z_n)\right)^2=O\left((t-r)^2+n\left(\sum_{i=0}^k \un_{\mc{E}_n^c(v^{(-1)}(s_i))}\right)^2\right),\]
whence \[S_2=O\left((t-r)^2+n(k+1)^2\max_{0<i<k}\Pv[s](\mc{E}_n^c(v^{(-1)}(s_i)))\right).\]

It remains to deal with $S_1$. Using \eqref{deltaZ2}, the conditional second moment of $\Delta Z_n(s_i)$ is bounded by: 
\[
\Ev[s_i][(\Delta_i Z_n)^2]=  O(h+n^{-7.5}+n\un_{|X_n\circ v^{(-1)}(s_i)|>n^{0.2}}).
\]
Hence,
\[S_1 \leq 
\sum_{i=0}^{k}\Ev[s][(\Delta_i Z_n)^2]=O\left(t-r+n(k+1)\max_{0<i<k}\Pv[s](\mc{E}_n^c(v^{(-1)}(s_i)))\right).
\]

Putting together the estimates for $S_1$ and $S_2$ we have 
\[
\Ev[s][(Z_n(t)-Z_n(s))^2]=O\left(t-r+n(k+1)^2\max_{0<i<k}\Pv[s](\mc{E}_n^c(s_i))\right).
\]
Using this bound we obtain
\begin{multline*}
\E\left[(Z_n(s)-Z_n(r))^2(Z_n(t)-Z_n(s))^2\right] \le C(t-r)\E\left[(Z_n(s)-Z_n(r))^2\right]\\+C\E\left[(Z_n(s)-Z_n(r))^2 n(k+1)^2\max_{0<i<k}\Pv[s](\mc{E}_n^c(v^{(-1)}(s_i)))\right].
\end{multline*}

The second term is smaller than $n^{-10}$ by Lemma \ref{bounden}. The first term is bounded similarly as $\Ev[s][(Z_n(t)-Z_n(s))^2]$. Therefore, 
\[\E\left[(Z_n(s)-Z_n(r))^2(Z_n(t)-Z_n(s))^2\right] \le C(t-r)^2,\]
which finishes the proof of \eqref{tightness1} for $t-r>n^{-5}$.
\appendix
\section{Rare events\label{rareevent}}
In this Appendix we give an upper bound, uniform in $t\in[t_0,t_1]$, on the probability of the complement of the following events introduced in the article:
\begin{itemize}
\item $\mathcal{E}_n(t)$,  the event that, at time $t$, $|L_n(t)-n\rho(t)|\leq n^{0.7}$ or equivalently $|X_n(t)|\leq n^{0.2}$;
\item $\mathcal{A}_n(t)$,  the event that no component of $\cGt^{e}(t,h)$ has size greater than $\ln^2(n)$;
\item $\tilde{{\mathcal A}}_n(t)$, the event $\{M_i\le \ln^2(n),\quad \forall 1\le i \le n^2\}$, where the random variables $M_i$ are independent and distributed, conditional on $\mc{F}_n(t)$, as the total progeny of a $\BGW({n-\Lt(t)},\frac{t+h}{n})$.
\end{itemize}
We are going to show the following two lemmas:
\begin{lem}\label{bounden}
Let $0<\gamma<0.5$.  There exist positive constants $C$ and $C'$ such that for every $t\in[t_0, t_1]$,  \[\P(|X_n(t)|> n^{\gamma})\leq C'\exp(-Cn^{2\gamma}).\]
\end{lem}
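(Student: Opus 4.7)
My plan is to derive Lemma \ref{bounden} from Stepanov's pointwise exponential concentration for $|X_n(t)|$ together with a uniformity argument on the compact interval $[t_0,t_1]$.

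First I would invoke the pointwise bound: Stepanov \cite{Stepanov} (see also \cite{Puhalskii,BBDLV}) shows that for each fixed $t^*>1$ there exist constants $c_1(t^*),c_2(t^*)>0$ such that $\P(|X_n(t^*)|>n^\gamma)\leq c_1(t^*)\exp(-c_2(t^*)n^{2\gamma})$. The argument relies on generating-function estimates for the number of tree-components of given order, together with a saddle-point analysis. Inspecting the proof reveals that $c_1(t^*)$ and $c_2(t^*)$ depend on $t^*$ only through continuous expressions in $\rho(t^*)$, $\lambda(t^*)=t^*(1-\rho(t^*))$ and $(1-\lambda(t^*))^{-1}$. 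On $[t_0,t_1]$ with $t_0>1$, the critical quantity $1-\lambda(t^*)$ is bounded away from $0$, so these expressions are continuous and take values in compact sets that avoid degeneracy. Taking $C'=\max_{t^*\in[t_0,t_1]} c_1(t^*)$ and $C=\min_{t^*\in[t_0,t_1]} c_2(t^*)>0$ then yields the uniform bound in the statement.

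As a cleaner alternative which sidesteps any careful bookkeeping of the $t^*$-dependence of the constants in \cite{Stepanov}, one can use the monotonicity of $t\mapsto L_n(t)$ (since edges are only added as $t$ grows) together with a mesh argument. The function $\rho$ is Lipschitz on $[t_0,t_1]$ because $\rho'(t)=\rho(t)(1-\rho(t))/(1-\lambda(t))$ is bounded there. Choosing a grid $s_i=t_0+i\delta$ with $\delta=n^{\gamma-1}$ yields $N=O(n^{1-\gamma})$ grid points, and for $t\in[s_i,s_{i+1}]$ the inequalities $L_n(s_i)\leq L_n(t)\leq L_n(s_{i+1})$ combined with $n(\rho(s_{i+1})-\rho(s_i))=O(n^\gamma)$ give
\[
\sup_{t\in[s_i,s_{i+1}]}|X_n(t)|\leq\max(|X_n(s_i)|,|X_n(s_{i+1})|)+O(n^{\gamma-1/2}).
\]
A union bound reduces the problem to Stepanov's pointwise bound at the $N+1$ grid points, and the polynomial factor is absorbed into the exponential at the cost of slightly decreasing $C$. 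This approach has the bonus of yielding the stronger estimate $\P(\sup_{t\in[t_0,t_1]}|X_n(t)|>n^\gamma)\leq C'\exp(-Cn^{2\gamma})$.

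The main obstacle is really just the single input of a pointwise bound with uniform constants over $[t_0,t_1]$. Either route (tracking the $t$-dependence in Stepanov's proof, or gridding plus monotonicity with Stepanov at finitely many times) reduces to this one ingredient; no further non-trivial estimate is needed, and the rest of the argument is entirely elementary.
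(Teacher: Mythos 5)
Both of your routes hinge on the same unproven step, and that step is precisely what the paper identifies as the missing content and supplies in its Lemma~\ref{enbis}. The paper explicitly flags that "to the best of our knowledge such a uniform bound has never been shown," which is the authors' way of saying that the pointwise bound in the literature (they cite Puhalskii, Theorem~2.3, not Stepanov directly for the pointwise statement) does \emph{not} come with constants that are proven to be bounded on compacts. Your Route~1 asserts, by "inspecting the proof," that the pointwise constants $c_1(t^*),c_2(t^*)$ are continuous in $t^*$. That is exactly the claim that needs a proof, and asserting it does not give one; the actual work of tracking the $t$-dependence is the whole lemma. Your Route~2 (gridding plus monotonicity of $L_n$) is a nice observation and does buy the stronger supremum form of the bound, and your computation of the $O(n^{\gamma-1/2})$ correction on a mesh of size $\delta=n^{\gamma-1}$ is correct. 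But because $\delta$ must shrink with $n$, the grid points are $n$-dependent, so the union bound over $O(n^{1-\gamma})$ mesh points still requires $\sup_{t\in[t_0,t_1]}c_1(t)<\infty$ and $\inf_{t\in[t_0,t_1]}c_2(t)>0$. Gridding does not eliminate the uniformity requirement; it only converts it from a continuum statement to an $n$-dependent finite one, which is the same requirement.

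For comparison, the paper does not attempt to cite a pointwise bound and argue continuity of constants. Instead it reaches for an ingredient from Stepanov whose error term is \emph{explicitly} stated to be uniform: Theorem~1 of Stepanov gives the connectedness probability $P_n(y/n)$ with a $o(1)$ that is uniform in $y\geq y_0$. Starting from that input, the paper bounds $\P(\hat L_n(y)=k)$ by the expected number of $k$-vertex components $E_{n,k}(y/n)$, applies Stirling, and analyses the exponent $\phi(x,y)$ (rewritten as $\psi(x,\delta)$ with $\delta=1-e^{-xy}-x$, which satisfies $\psi\leq-2\delta^2$) to obtain an explicit and uniform exponential decay for $|k/n-\rho(y)|>n^{\gamma-0.5}$. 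The outer regimes $\hat L_n(y)\leq\epsilon n$ and $\hat L_n(y)\geq(1-\epsilon)n$ are handled by monotonicity in $y$ and a fixed-time large-deviation bound. So the paper's proof is a fresh derivation with uniformity built in by construction, not a continuity argument applied to a prior pointwise result. To make your proposal rigorous you would need to either carry out this kind of explicit analysis yourself or locate a precise reference that states the pointwise bound with constants that are locally bounded in $t$; neither is done in your write-up.
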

\begin{lem}\label{boundan}
Let $0<\gamma<0.5$. There exists $h_0>0$ such that, for $0\leq h\le h_0,$ uniformly in $t\in[t_0, t_1]$, on the event $\{|X_n(t)|< n^{\gamma}\}$, for all $R>0$,
\[ \P(\mathcal{A}^{c}_n(t)\mid \mc{F}_n(t))=O(n^{-R}) \text{ and }\P(\tilde{\mathcal{A}}^{c}_n(t)\mid \mc{F}_n(t))=O(n^{-R}).\]
\end{lem}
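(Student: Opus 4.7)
The strategy is to first control the $\tilde{\mathcal{A}}_n(t)$ event by a union bound on the $n^2$ i.i.d.\ total progenies $M_i$, and then to reduce the $\mathcal{A}_n(t)$ bound to the same tail estimate via the domination inequality \eqref{vdhmajcomp} of Theorem~\ref{propvdh}. The crux in both cases is a uniform subcritical tail bound for the total progeny of the binomial BGW governing the dynamics outside $\mathscr{C}^{max}_n(t)$.

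The first step is to verify that, uniformly on $t\in[t_0,t_1]$, on the event $\{|X_n(t)|<n^\gamma\}$ and for $h$ sufficiently small, the process $\BGW(n-L_n(t),(t+h)/n)$ is strictly subcritical. Its mean offspring equals
\[
(n-L_n(t))\frac{t+h}{n}=(1-\rho(t))(t+h)+O(n^{\gamma-1})=\lambda(t)+h(1-\rho(t))+O(n^{\gamma-1}).
\]
Since $t\mapsto \lambda(t)=t(1-\rho(t))$ is continuous on $[t_0,t_1]$ and strictly less than $1$ there, there exists $\delta>0$ such that $\lambda(t)\leq 1-2\delta$ on $[t_0,t_1]$. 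Choosing $h_0>0$ small enough and $n$ large enough, the mean is at most $1-\delta$ on the event $\{|X_n(t)|<n^\gamma\}$.

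The second step is the tail bound. Standard branching process theory gives that if $T$ denotes the total progeny of a $\BGW(m,p)$ with $mp\leq 1-\delta$, then there exist constants $c=c(\delta)>0$ and $K=K(\delta)<\infty$ (independent of $m$ and $p$) such that $\P(T\geq k)\leq K e^{-c k}$ for all $k\geq 1$. One way is to note that the generating function $\varphi(s)=(1-p+ps)^m$ has $\varphi(e^\theta)e^{-\theta}<1$ for some $\theta>0$ as soon as $mp<1$, and then apply Markov's inequality to $e^{\theta T}$ using the classical identity $\E[s^T]=s\varphi(\E[s^T])$ or the Otter–Dwass formula. Taking $k=\ln^2 n$ yields
\[
\P(M_1>\ln^2 n\mid \mc{F}_n(t))\leq Ke^{-c\ln^2 n},
\]
which is $o(n^{-R})$ for every $R>0$.

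The last step combines both ingredients. By a union bound,
\[
\P(\tilde{\mathcal{A}}_n^c(t)\mid \mc{F}_n(t))\leq n^2\,\P(M_1>\ln^2 n\mid \mc{F}_n(t))=O(n^{-R}).
\]
For $\mathcal{A}_n(t)$, inequality \eqref{vdhmajcomp} of Theorem~\ref{propvdh} says that the size of the component of any fixed vertex in $\cGt^{e}(t,h)\sim\ER(n-L_n(t),(t+h)/n)$ is stochastically dominated by $M_1$, so a union bound over the at most $n$ vertices outside $\mathscr{C}^{max}_n(t)$ gives
\[
\P(\mathcal{A}_n^c(t)\mid \mc{F}_n(t))\leq n\,\P(M_1>\ln^2 n\mid \mc{F}_n(t))=O(n^{-R}).
\]
The only delicate point is step two: ensuring the constants $c,K$ can be chosen uniformly in $t\in[t_0,t_1]$ and in $n$. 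This follows from the uniform gap $\lambda(t)\leq 1-2\delta$ established in step one, together with the fact that the exponential moment bound depends only on the mean offspring being bounded away from $1$, which persists uniformly on the good event.
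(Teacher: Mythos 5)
Your proof is correct and rests on the same pillar as the paper's, namely the uniform subcritical gap $\lambda(t)\le 1-2\delta$ on $[t_0,t_1]$ ensuring the offspring mean stays below $1-\delta$ for small $h$ on $\{|X_n(t)|<n^\gamma\}$. Where you differ is in how the two bounds are then derived. The paper handles the two events with two separate citations: for $\mathcal{A}_n(t)$ it invokes a ready-made result (Theorem~4.4 of van der Hofstad) on the largest component of a subcritical $\ER(n,c/n)$ graph, giving $\P(|\mc{C}_{\max}|\ge a\ln n)\le n^{1-aI(c)}$ directly; for $\tilde{\mathcal{A}}_n(t)$ it dominates $M_i$ by the total progeny of a subcritical Poisson BGW (Borel distribution with exponential moments) and applies a union bound. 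You instead prove a single uniform exponential tail $\P(T\ge k)\le K e^{-ck}$ for the total progeny of a binomial BGW with mean at most $1-\delta$, use it for $\tilde{\mathcal{A}}_n(t)$ via union bound over $n^2$ copies, and reduce $\mathcal{A}_n(t)$ to the same tail via the stochastic domination \eqref{vdhmajcomp} and a union bound over the $\le n$ vertices outside $\cm(t)$. Your route is more self-contained and unified: one tail estimate serves both events, and you avoid the dependence on the ER-component theorem. The paper's route is shorter on paper but leans on two separate external facts. Both buy the same conclusion.

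One small slip: in your first-step expansion the error term should be $O(n^{\gamma-0.5})$ rather than $O(n^{\gamma-1})$, since $X_n(t)/\sqrt{n}$ is $O(n^{\gamma-0.5})$ on the good event. This does not affect the argument, because $\gamma<0.5$ guarantees the exponent is negative either way, but the stated rate should be corrected.
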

\subsection*{Proof of Lemma \ref{bounden}}
Our main tool to prove Lemma \ref{bounden} is the following Theorem by Stepanov:

\begin{tref}[Theorem 1 in \cite{Stepanov}]
  Set $y_0>0$ and let $P_n(s)$ denote the probability that a $\ER(n, 1-e^{-s})$ graph is connected. 
  \begin{equation}
  \label{stepanov}
   P_n\left(\frac{y}{n}\right)=\left(1-\frac{y}{e^{y}-1}\right)(1-e^{-y})^n(1+o(1))   
  \end{equation}
  where the $o(1)$ is uniform in $y\ge y_0$.
\end{tref}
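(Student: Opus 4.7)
Equation \eqref{stepanov} is a sharp asymptotic, uniform in $y\ge y_0$, for the probability that an $\ER(n,1-e^{-y/n})$ graph is connected. My approach is to derive an exact convolution identity, split the resulting sum according to the size of a distinguished component, and match the claimed asymptotic using Borel-distribution identities for the total progeny of a Poisson Galton--Watson tree.

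The starting point is the identity obtained by conditioning on the size $k$ of the connected component of vertex $1$. Picking its other $k-1$ vertices ($\binom{n-1}{k-1}$ choices), requiring the induced subgraph on these $k$ vertices to be connected (factor $P_k(y/n)$), and requiring none of the $k(n-k)$ edges joining this component to its complement to be present (factor $(e^{-y/n})^{k(n-k)}$), and summing over $k\in\{1,\ldots,n\}$, yields
\[
1=\sum_{k=1}^n\binom{n-1}{k-1} P_k(y/n)\,e^{-yk(n-k)/n},
\]
so that
\[
P_n(y/n)=1-\sum_{k=1}^{n-1}\binom{n-1}{k-1} P_k(y/n)\,e^{-yk(n-k)/n}.
\]

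I would then evaluate the right-hand side by splitting the sum according to $k$. For $k\ll n$, a refinement of Cayley's formula gives $P_k(y/n)=k^{k-2}(y/n)^{k-1}(1+O(k^3/n))$, since the dominant configurations are spanning trees with few excess edges; coupled with $\binom{n-1}{k-1}\sim n^{k-1}/(k-1)!$ and $e^{-yk(n-k)/n}\sim e^{-yk}$, each small-$k$ term reduces (up to a factor of $k/y$) to the Borel-type summand $(ky)^{k-1}e^{-ky}/k!$ arising as the total-progeny distribution of a $\Po(y)$ Galton--Watson tree. For $k$ close to $n$, write $k=n-j$ with small $j$ and treat $P_{n-j}(y/n)$ as the connectivity probability of $\ER(n-j,y_j/(n-j))$ with $y_j=y(n-j)/n$, bootstrapping the target asymptotic to these terms via an induction on $n$. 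The leading exponential factor $(1-e^{-y})^n$ arises from combining $(e^{-y/n})^{k(n-k)}$ with the Stirling factor in $\binom{n-1}{k-1}$, while the multiplicative prefactor $1-y/(e^y-1)$ emerges after separating the $k=1$ (isolated-vertex) term from the $k\ge 2$ (small tree-component) contributions and invoking the Borel-distribution identity $\sum_{k\ge 1}(ky)^{k-1}e^{-ky}/k!=1-\rho(y)$ (which equals $1$ for $y\le 1$).

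The main obstacle is the matching step. Recovering the precise prefactor $(1-y/(e^y-1))$ requires keeping subleading corrections throughout the Stirling and Taylor expansions, and a careful accounting of the cancellation between the isolated-vertex contribution and the sum over $k\ge 2$. A further delicate point is uniformity in $y\ge y_0$: the analysis naturally splits into the subcritical-connectivity regime $y_0\le y\lesssim \ln n$, where $(1-e^{-y})^n$ decays exponentially and the expected number of isolated vertices is large, and the near-threshold regime $y\gtrsim \ln n$, where Poisson approximation of the isolated-vertex count becomes essentially exact and $(1-e^{-y})^n\sim e^{-ne^{-y}}$. Showing that all error terms are $o((1-e^{-y})^n)$ uniformly across both regimes, and across the crossover between them, is the main technical burden.
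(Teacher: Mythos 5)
This statement is not proved in the paper at all: it is Theorem~1 of Stepanov~\cite{Stepanov}, quoted verbatim as an external input (the only thing the appendix actually proves is Lemma~\ref{bounden}, via Lemma~\ref{enbis}, \emph{using} Stepanov's theorem). So there is no in-paper argument to compare against, and your proposal has to be judged as a from-scratch proof of Stepanov's result.

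Taken on those terms, your starting identity is correct and is indeed the classical entry point (condition on the component of vertex~$1$; the three factors are independent because they involve disjoint sets of potential edges). But the plan for extracting $P_n(y/n)$ from $1-\sum_{k=1}^{n-1}(\cdots)$ has two concrete defects. First, a precision mismatch: for $y$ in a bounded range the target $P_n(y/n)\asymp(1-e^{-y})^n$ is \emph{exponentially} small in $n$, so you must evaluate the sum with exponentially small \emph{absolute} error. The approximations you propose carry polynomial errors: already the $k=1$ term is $e^{-y(n-1)/n}=e^{-y}+\Theta(1/n)$, and the tree approximation $P_k(y/n)=k^{k-2}(y/n)^{k-1}(1+O(k^3/n))$ contributes a further $O(1/n)$ in aggregate. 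These errors swamp $(1-e^{-y})^n$ by an exponential factor, so no amount of ``keeping subleading corrections'' in Stirling/Taylor expansions of this type can exhibit the required cancellation. Second, a structural hole: your split into ``$k\ll n$'' and ``$k=n-j$ with $j$ small'' omits the range $k\approx\rho(y)\,n$ (with $\rho(y)<1$ strictly), which is precisely where the giant-component mass $\approx\rho(y)$ of the sum sits; the terms with $k=n-j$, $j=O(1)$, sum to only $O\bigl((1-e^{-y})^n e^{ne^{-y}}\bigr)$, which is still exponentially small. Without an exponentially precise evaluation of the bulk near $k=\rho(y)n$, the identity only yields $P_n=\rho(y)-\sum_{k\approx\rho(y)n}(\cdots)+O(1/n)$, i.e.\ nothing. (The induction for $P_{n-j}$ is also underspecified: a $(1+o(1))$ statement cannot be bootstrapped without an explicit, summable error bound.) The Borel identity $\sum_{k\ge1}(ky)^{k-1}e^{-ky}/k!=1-\rho(y)$ is correct and does explain the leading behaviour of the small-$k$ block, but it cannot by itself produce the prefactor $1-y/(e^y-1)$, which lives at the exponentially small scale. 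A workable proof needs either Stepanov's own exact recursive/analytic treatment or an exponential-generating-function argument with saddle-point precision; the proposal as written would not close.
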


Note that Stepanov's result uses a different time parametrization (namely ${\frac{t}{n}=1-e^{-\frac{y}{n}}}$). For the sake of readability we will use Stepanov's parametrization, and prove the following lemma, which is equivalent to  Lemma \ref{bounden}.
\begin{lem}
\label{enbis}
Set  $\hat{L}_n(y)=L_n(n(1-e^{-y/n}))$ the largest order of connected components of a ${\ER(n,1-e^{-\frac{y}{n}})}$ graph.\\
For $1<y_0<y_1$ and  $0<\gamma<0.5$,  there exist positive constants $C$ and $C'$  such that for every $y\in[y_0,y_1]$ and $n\in \mathbb{N}$, 
\[\P\left(|\hat{L}_n(y)-n\rho(y)|> n^{0.5+\gamma}\right)\leq C'e^{-Cn^{2\gamma}}.\]
\end{lem}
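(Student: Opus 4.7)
My plan is to control both tails of $\hat{L}_n(y) - n\rho(y)$ by a union bound on the number $N_k(y)$ of connected components of size exactly $k$ in $\ER(n, 1-e^{-y/n})$, with Stepanov's formula \eqref{stepanov} supplying sharp asymptotics for $\E[N_k(y)]$.

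The starting identity is $\E[N_k(y)] = \binom{n}{k} P_k(y/n)\, e^{-yk(n-k)/n}$. Writing $P_k(y/n) = P_k((yk/n)/k)$ and inserting \eqref{stepanov} (whose error is uniform in $yk/n \geq y_0$, hence uniform in $y \in [y_0, y_1]$ and $k \geq n y_0/y_1$), together with Stirling applied to $\binom{n}{k}$, yields
\[\E[N_k(y)] = \frac{C(y, k/n)}{\sqrt n}\exp\!\bigl(n\,\phi(y, k/n)\bigr)(1+o(1)),\]
where $\phi(y, \alpha) := -\alpha\log\alpha -(1-\alpha)\log(1-\alpha) + \alpha\log(1-e^{-y\alpha}) - y\alpha(1-\alpha)$ and $C$ is a positive continuous prefactor. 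The key properties of $\phi$ follow from the identity $1-\rho(y) = e^{-y\rho(y)}$: direct differentiation gives $\phi(y, \rho(y)) = 0$, $\partial_\alpha \phi(y, \rho(y)) = 0$, and $\partial_\alpha^2 \phi(y, \rho(y)) = -(1-\lambda(y))^2/(\rho(y)(1-\rho(y)))$, which is strictly negative and bounded away from $0$ uniformly on $[y_0, y_1]$. Combined with $\phi(y, \cdot) < 0$ elsewhere in $(0, 1]$, a compactness argument yields constants $c_1, c_2 > 0$ (uniform in $y \in [y_0, y_1]$) such that $n\phi(y, \alpha) \leq -c_2 n^{2\gamma}$ whenever $\alpha \in [\delta, 1]$ and $|\alpha - \rho(y)| \geq n^{-1/2+\gamma}$, for any fixed $\delta > 0$.

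For the upper tail, the event $\{\hat{L}_n(y) > n\rho(y) + n^{1/2+\gamma}\}$ forces $N_k(y) \geq 1$ for some $k$ in the bad range, and the union bound gives
\[\P\bigl(\hat{L}_n(y) > n\rho(y) + n^{1/2+\gamma}\bigr) \leq \sum_{k > n\rho(y)+n^{1/2+\gamma}} \E[N_k(y)] \leq C\sqrt n\, e^{-c_2 n^{2\gamma}}.\]
For the lower tail I will fix $\epsilon < \rho(y_0)/2$ and split
\[\{\hat{L}_n(y) < n\rho(y) - n^{1/2+\gamma}\} \subseteq \{\hat{L}_n(y) \in [\epsilon n, n\rho(y) - n^{1/2+\gamma})\} \cup \{\hat{L}_n(y) < \epsilon n\}.\]
The first piece is handled by the same union bound as the upper tail, contributing $O(\sqrt n\, e^{-c_2 n^{2\gamma}})$. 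For the second piece I will invoke a separate uniform-in-$y$ exponential bound $\P(\hat{L}_n(y) < \epsilon n) \leq e^{-c_3 n}$; since $\gamma < 1/2$, this is negligible compared to $e^{-C n^{2\gamma}}$.

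The main obstacle is this last piece: Stepanov's asymptotic \eqref{stepanov} is not uniformly valid when $k = o(n)$, so the $\phi$-based argument cannot directly reach the event $\{\hat{L}_n(y) < \epsilon n\}$. I plan to close the gap by a classical supercritical large-deviation argument, e.g., by coupling the breadth-first exploration started from a typical vertex with a supercritical $\Bin(n, 1-e^{-y/n})$ Galton--Watson tree and exploiting standard exponential tail bounds for its survival, checking that the resulting rate stays bounded below as $y$ varies over the compact interval $[y_0, y_1]$.
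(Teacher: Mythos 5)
Your proposal follows essentially the same route as the paper: control $\P(\hat{L}_n(y)=k)$ by the expected number of size-$k$ components, expand this expectation via Stepanov's connectivity asymptotic and Stirling's formula to obtain the $e^{n\phi(y,k/n)}$ bound, exploit a quadratic bound on $\phi(y,\cdot)$ near its unique interior zero $\rho(y)$, and cover the small-$k$ tail with a crude $\exp(-\Theta(n))$ large-deviation estimate. Your computation $\partial_\alpha^2\phi(y,\rho(y)) = -(1-\lambda(y))^2/(\rho(y)(1-\rho(y)))$ is correct; the paper reaches the same quadratic decay by the substitution $\delta=1-e^{-\alpha y}-\alpha$ and the inequality $\psi(x,\delta)\le -2\delta^2$. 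Two remarks on the differences. First, the paper also truncates the union-bound sum at $(1-\epsilon)n$ and handles $\{\hat{L}_n(y)\ge(1-\epsilon)n\}$ as a separate term: your stated asymptotic $\E[N_k(y)]=\frac{C(y,k/n)}{\sqrt n}e^{n\phi(y,k/n)}(1+o(1))$ is not uniform near $k=n$, because the Stirling prefactor $1/\sqrt{2\pi k(1-k/n)}$ is $\Theta(1)$ rather than $\Theta(n^{-1/2})$ there and the $o(1)$ from applying Stirling to $(n-k)!$ degrades as $n-k$ stays bounded; the conclusion survives since $\phi(y,1)=\log(1-e^{-y})<0$, but the argument needs to be run more carefully at that end, which the paper's cutoff sidesteps. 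Second, for both boundary events $\{\hat{L}_n\le\epsilon n\}$ and $\{\hat{L}_n\ge(1-\epsilon)n\}$ the paper dispenses with the uniformity-in-$y$ issue entirely by observing that these probabilities are monotone in $y$ under the natural coupling of the graph process, so they are bounded by their values at the single endpoints $y_0$ and $y_1$ respectively, where the fixed-$y$ large deviation result of O'Connell applies directly. This is simpler than the uniform-in-$y$ exploration/Galton--Watson coupling you sketch as your ``main obstacle'', which would require a separate check that the survival rate is bounded away from zero on $[y_0,y_1]$; you may want to adopt the monotonicity shortcut.
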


\begin{proof}
The ideas come from the proof of Theorem 2 in Stepanov's article \cite{Stepanov}, however we will use them towards a different objective.
We call $E_{n,k}(s)$ the expected number of components with $k$ vertices in an $\ER(n, 1-e^{-s})$ graph.
As pointed out in Stepanov's article (Eq. (27) in \cite{Stepanov}),
\[E_{n,k}\left(\frac{y}{n}\right)=\binom{n}{k}e^{-yk(1-k/n)}P_k\left(\frac{y}{n}\right).\]

Our idea is to use the upper bound $\P(\hat{L}_n(y)=k)\le E_{n,k}(y/n)$. Obviously, this upper bound is extremely bad if $k$ is much smaller than $n$. Therefore, we will treat these values separately. Set $0<\epsilon<\min(\rho(y_0),1-\rho(y_1)).$ 
We now decompose 
\begin{multline}\label{decsum}
    \P(|\hat{L}_n(y)-n\rho(y)|> n^{\gamma+0.5})\le \P(\hat{L}_n(y)\le \epsilon n)+\sum_{\substack{\epsilon n\le k\le (1-\epsilon)n : \\ |\frac kn - \rho(y)|> n^{\gamma-0.5}}}E_{n,k}\left(\frac{y}{n}\right)\\+\P(\hat{L}_n(y)\ge (1-\epsilon) n).\end{multline}
Note that the first and third terms are monotonous in $y$. Therefore, we only have to bound them for $y=y_0$ and $y=y_1$, respectively. This can be done using a large deviation principle for $\hat{L}_n(y)$ at a fixed value of $y$ (see for example Theorem 3.1 of \cite{Oconnell}). It yields bounds of order $\exp(-Cn)$.

To deal with the second term we use Stirling's formula to get, for $\epsilon n\leq k\leq (1-\epsilon) n$,
\[E_{n,k}\left(\frac{y}{n}\right)=\frac{1}{\sqrt{2\pi k(1-\frac{k}{n})}\left(\frac{k}{n}\right)^k (1-\frac{k}{n})^{n-k}} e^{-yk(1-\frac{k}{n})}P_k\left(\frac{y}{n}\right)(1+o(1)).\]

Using  \eqref{stepanov}, there exists a constant $C_{y_0}>0$ only dependent on $y_0$ such that for $y\ge y_0$, \[P_n\left(\frac{y}{n}\right)\le C_{y_0}\left(1-\frac{y}{e^{y}-1}\right)(1-e^{-y})^n.\]
In order to bound from above $E_{n,k}\left(\frac{y}{n}\right)$, we have to deal with $P_k(\frac yn)=P_k(\frac kn\frac yk)$ where ${\frac kn y\geq \epsilon y_0}$, hence
\[E_{n,k}\left(\frac{y}{n}\right)\le C_{\epsilon y_0}\frac{1}{\sqrt{2\pi k(1-\frac{k}{n})}}\left(1-\frac{\frac{k}{n} y}{e^{\frac{k}{n} y}-1}\right)e^{n\phi(\frac{k}{n},y)},\]
with $\phi(x, y)=-xy(1-x)+x\ln(1-e^{-x y})-x\ln x-(1-x)\ln(1-x).$

This function cancels if and only if $x=0$ and $x=\rho(y)$. Indeed, if we express it as a function of $x$ and $\delta(x,y)=1-e^{-xy}-x$, it writes:
\[\phi(x, y)=x\ln(1+\frac{\delta}x)+(1-x)\ln(1-\frac{\delta}{1-x})=:\psi(x,\delta),\]
and $\dfrac{\partial \psi}{\partial\delta}(x,\delta)=-\dfrac{\delta}{(x+\delta)(1-x-\delta)}.$

We observe that $|\dfrac{\partial \psi}{\partial\delta}(x,\delta)|\geq 4|\delta|$, hence $\psi(x,\delta)\leq -2\delta^2$.
Therefore, $\psi$ vanishes only for $\delta=0$ (it increases before and decreases after 0 since ${-x\leq\delta\leq 1-x}$).

Now note that $\delta(\cdot,y)$ is a concave function such that $\delta(0,y)=\delta(\rho(y),y)=0.$ Furthermore, 
\begin{eqnarray*}&&\frac{\partial \delta(x,y)}{\partial x}|_{x=0}=y-1\ge y_0-1,\\
&&\frac{\partial \delta(x,y)}{\partial x}|_{x=\rho(y)}=y(1-\rho(y))-1=\lambda(y)-1<\lambda(y_0)-1<0.
\end{eqnarray*}
Consequently, for $k$ such that $\epsilon n\le k\le (1-\epsilon)n$ and $|\frac kn - \rho(y)|> n^{\gamma-0.5}$, we get ${\left|\delta\left(\frac{k}{n},y\right)\right|>Cn^{\gamma-0.5}}$, which implies
 $\left|\phi\left(\frac{k}{n},y\right)\right|>Cn^{2\gamma-1}$ and in turn,
\[E_{n,k}(\frac{y}{n})\le C'e^{-Cn^{2\gamma}}.\]

Summing over those values of $k$ shows that the  sum on the right-hand side of
\eqref{decsum} is smaller than $C'e^{-Cn^{2\gamma}}$.

\end{proof}
\subsection*{Proof of  Lemma \ref{boundan}}
To prove Lemma \ref{boundan}, let us start by showing that the event \[\mc{A}^{c}_n(t)=\{\cGt^{e}(t,h) \text{ has components of order greater than } \ln^{2}(n)\}\] is a rare event uniformly for $t\in [t_0,t_1]$ as $n$ tends to $+\infty$. 
The idea is that  on the event $\{|X_n(t)|< n^{\gamma}\}$, $\cGt^{e}(t,h)$ is distributed as a subcritical ER graph for $n$ large enough so we can apply the following result on subcritical ER graph:
\begin{tref}[Theorem 4.4 in \cite{vdh}]
Set $I(c)=c-1-\ln(c)$ and  $|\mc{C}_{max}|$ the order of the largest connected component of an $\ER(n,\frac{c}{n})$ graph for $c>0$. \\
For every $0<c<1$ and $a>0$, 
$\P\left(|\mc{C}_{\max}|\geq a\ln(n)\right)\leq  n^{1-aI(c)}$. 
\end{tref}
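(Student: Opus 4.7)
The plan is to exploit the fact that, conditional on $\mc{F}_n(t)$, the subgraph $\cGt^e(t,h)$ is distributed as $\ER(n-L_n(t),\frac{t+h}{n})$. Its effective mean-degree parameter
\[c_n(t,h):=(n-L_n(t))\frac{t+h}{n}=(1-\rho(t))(t+h)+O(n^{\gamma-1/2})\]
on the event $\{|X_n(t)|<n^{\gamma}\}$ is close to $\lambda(t)=t(1-\rho(t))$. Since $\lambda$ is bounded away from $1$ on $[t_0,t_1]$ (remark following \eqref{lambda}), I would first fix $h_0>0$ small enough and $n_0$ large enough so that, for all $n\geq n_0$, $t\in[t_0,t_1]$, $h\in[0,h_0]$ and on $\{|X_n(t)|<n^{\gamma}\}$, one has $c_n(t,h)\leq\lambda^{*}<1$ for a constant $\lambda^{*}=\lambda^{*}(t_0,t_1,h_0)$. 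Both events under study then reduce to tail bounds for a uniformly subcritical regime.

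For $\P(\mathcal{A}_n^c(t)\mid\mc{F}_n(t))$, I would invoke the subcritical ER tail estimate cited just after the statement. Since $I(c)=c-1-\ln c$ is continuous and strictly positive on $(0,1)$, we have $I(c_n(t,h))\geq I_0:=I(\lambda^{*})>0$ uniformly. Given any $R>0$, choose $a:=(R+1)/I_0$; then $a\ln(n)\leq\ln^{2}(n)$ for $n$ sufficiently large, and the cited theorem (applied conditionally on $\mc{F}_n(t)$, which fixes the ER parameters of $\cGt^e(t,h)$) yields
\[\P(\mathcal{A}_n^c(t)\mid\mc{F}_n(t))\leq\P\bigl(|\mc{C}_{\max}|\geq a\ln(n)\bigm|\mc{F}_n(t)\bigr)\leq (n-L_n(t))^{1-aI_0}\leq n^{-R}.\]

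For $\P(\tilde{\mathcal{A}}_n^c(t)\mid\mc{F}_n(t))$ I would rely on a uniform exponential tail for the total progeny $T$ of a subcritical $\BGW(n-L_n(t),\frac{t+h}{n})$. Since the offspring mean is bounded by $\lambda^{*}<1$ and the offspring law is Binomial (hence has all exponential moments, with mgf converging to that of $\Po(\lambda^{*})$), a standard argument produces a constant $\kappa=\kappa(\lambda^{*})>0$ such that
\[\P(M_1\geq k\mid\mc{F}_n(t))\leq e^{-\kappa k}\]
uniformly in the realization of $L_n(t)$ on the event at hand. One can obtain $\kappa$ either by solving the probability generating function equation $G_T(s)=s(1-p+pG_T(s))^{n-L_n(t)}$ for $s$ slightly above $1$, or by invoking the hitting-time identity $\P(T=k)=\frac{1}{k}\P(S_k=k-1)$ with $S_k$ a sum of $k$ i.i.d. Binomial offspring and then applying a Chernoff bound. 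A union bound over $M_1,\ldots,M_{n^{2}}$ then gives
\[\P(\tilde{\mathcal{A}}_n^c(t)\mid\mc{F}_n(t))\leq n^{2}e^{-\kappa\ln^{2}(n)},\]
which is $o(n^{-R})$ for every $R>0$.

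The main difficulty, though technical rather than conceptual, is securing the uniformity of the constants $\lambda^{*}$, $I_0$ and $\kappa$ in $t\in[t_0,t_1]$, in $h\in[0,h_0]$ and in the realization of $L_n(t)$ on $\{|X_n(t)|<n^{\gamma}\}$. This reduces to $\sup_{t\in[t_0,t_1]}\lambda(t)<1$, already noted in Section \ref{heuristique}, together with a choice of $h_0$ small enough and $n_0$ large enough to absorb the $O(n^{\gamma-1/2})$ correction, which is possible since $\gamma<1/2$.
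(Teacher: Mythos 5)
Your proposal is correct and follows essentially the same route as the paper: reduce to a uniformly subcritical regime via $\sup_{t\in[t_0,t_1]}\lambda(t)<1$ and the $O(n^{\gamma-1/2})$ control on $\{|X_n(t)|<n^{\gamma}\}$, apply the cited subcritical tail bound with $a$ chosen so that $aI_0>R+1$ for $\mathcal{A}_n(t)$, and use a uniform exponential tail for the subcritical total progeny plus a union bound for $\tilde{\mathcal{A}}_n(t)$. The only cosmetic difference is that for the second event the paper dominates the $M_i$ by the total progeny of a subcritical Poisson $\BGW$ (a Borel distribution with exponential moments) rather than working directly with the binomial offspring law as you do.
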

In our case, the parameter $c$ depends on $X_n(t)$: the graph $\cGt^{e}(t,h)$ is distributed as a  ${\ER(n-L_n(t),\frac{c_n(t)}{n-L_n(t)})}$ graph with $c_n(t)=\lambda(t)-\frac{X_n(t)}{\sqrt{n}}(t+h)+h(1-\rho(t))$.  
Since $I(c)$ is decreasing on $[0,1],$ the result follows as soon as we show that there exists $\epsilon\in]0,1[$ and $n_0$ not depending on $t$ such that, for $n\ge n_0,$ $c_n(t)<1-\epsilon$ on the event $\{|X_n(t)|\leq n^{\gamma}\}$. This holds for $h$ small enough
since, on the event $\{|X_n(t)|\leq n^{\gamma}\}$, $\frac{|X_n(t)|}{\sqrt{n}}(t+h)\leq 2t_1n^{\gamma-0.5}$ and $\sup_{t\in[t_0,t_1]}\lambda(t)<1$. 

To deal with the event $\tilde{{\mathcal A}}_n(t)=\{M_i\le (\ln(n))^2, \forall 1\le i \le n^2\}$, we observe  that, on the event $\{|X_n(t)|\leq n^{\gamma}\}$, the random variables  $M_i$, $1\le i\le n^2$, can be dominated by the total progeny of a subcritical Poisson $\BGW$ process with parameter $1-\epsilon$ for every $t\in [t_0, t_1].$ This follows a Borel distribution, which admits an exponential moment.

\section{Proof of Proposition \ref{normanth1}\label{annex:proofthnornm}}
Let $k\in \mathbb{N}^*$.  Consider $k$ positive reals $0<s_1<\ldots<s_k$  and $k$ functions $f_1,\ldots f_k$ in $\mc{H}$. We shall prove that  $\E\left(\prod_{i=1}^{k}f_i(Z_n(s_i))\right)$  converges towards $\E\left(\prod_{i=1}^{k}f_i(Z(s_i))\right)$ as $n$ tends to $+\infty$. 
To study the difference of these two expectations, set 
   \[I_{n,u,v}(f)=\E\left(f(Z_{n}(v))\mid \mathcal{F}_{n}(u))-T_{v-u}f(Z_{n}(u)\right)\quad \forall 0\leq u<v \text{ and } \forall f\in \mc{H}. 
   \] 
With this notation, 
\[\E\left(f_1(Z_n(s_1))\right)=\E\left(I_{n,0,s_1}(f_1)\right)+\E\left(T_{s_1}f_1(Z_{n}(0))\right)\]  
and for $k\geq 2$, by setting $s_0=0$
\begin{multline*}
\E\left(\prod_{i=1}^{k}f_i(Z_{n}(s_i))\right)=\E\left(\prod_{i=1}^{k-1}f_i(Z_{n}(s_i))I_{n,s_{k-1},s_k}(f_k)\right)\\+\E\left(\prod_{i=1}^{k-1}f_i(Z_{n}(s_i))T_{s_k-s_{k-1}}f_k(Z_{n}(s_{k-1}))\right).
\end{multline*}

Let us note that by assumption, if $h_1,h_2\in\mc{H}$  and $0<u<v$, the two functions  $T_{v-u}h_1$ and $h_1T_{v-u}h_2$  belong to $\mc{H}$ and $\E\left(T_{s_1}f_1(Z_{n}(0))\right)$ converges to $\E\left(f_1(Z(s_1))\right)$. Therefore, Proposition \ref{normanth1} follows by induction on $k$ if we show that $\E(|I_{n,u,v}(f)|)$ converges to $0$ as $n$ tends to $+\infty$ for every $0\leq u<v$ and $f\in\mc{H}$. 

Set  $h_n=\frac{v-u}{N_n}$, $u_j=u+jh_n$ and $g_j=T_{(N_n-j)h_n}f$ for $j\in\{0,\ldots, N_n\}$. With these notations, 
\begin{eqnarray*}I_{n,u,v}(f)&=&\E(g_{N_n}(Z_{n}(u_{N_n}))\mid \mathcal{F}_{n}(u_0))-g_0(Z_{n}(u_{0}))\\
&=&\sum_{j=0}^{N_n-1}\E\left[\E(g_{j+1}(Z_{n}(u_{j+1}))\mid \mathcal{F}_{n}(u_{j}))-g_{j}(Z_{n}(u_{j}))\mid \mathcal{F}_{n}(u_0)\right].
\end{eqnarray*}
Let us note that  $g_j=T_{h_n}g_{j+1}$. Therefore, 
$g_{j+1}(Z_{n}(u_{j+1}))-g_{j}(Z_{n}(u_{j}))$ can be decomposed as the difference of  
$g_{j+1}(Z_{n}(u_{j+1}))-g_{j+1}(Z_{n}(u_{j})))$ and 
$T_{h_n}g_{j+1}(Z_n(u_j))-g_{j+1}(Z_{n}(u_{j})))$. 

By applying Taylor expansion to $g_{j+1}(Z_{n}(u_{j+1}))-g_{j+1}(Z_{n}(u_{j}))$ and by Assumption \hyperlink{hypA1}{A1}, 
\begin{multline*}\E(g_{j+1}(Z_{n}(u_{j+1}))\mid \mc{F}_{n}(u_{j})) -g_{j+1}(Z_{n}(u_{j}))=\\
h_n\left(g'_{j+1}(Z_n(u_{j}))\alpha(Z_n(u_{j}))+\frac12 g''_{j+1}(Z_n(u_{j}))\beta(Z_n(u_{j}))\right)+R_{n,j}
\end{multline*}
with $|R_{n,j}|\leq \sum_{i=1}^{3}||g^{(i)}_{j+1}||_{\infty}e_{i,n}(u_{j},h_n,Z_{n}(u_{j}))$. 
By the properties of the semigroup $T_{t}$, we have a similar expansion of
 $T_{h_n}g_{j+1}(Z_{n}(u_{j}))-g_{j+1}(Z_{n}(u_{j}))$:
 \begin{multline*}T_{h_n}g_{j+1}(Z_{n}(u_{j}))-g_{j+1}(Z_{n}(u_{j}))=\\ h_n\left(g'_{j+1}(Z_n(u_{j}))\alpha(Z_n(u_{j}))+\frac12 g''_{j+1}(Z_n(u_{j}))\beta(Z_n(u_{j}))\right)+\bar{R}_{n,j}
 \end{multline*}
with $|\bar{R}_{n,j}|\leq \sum_{i=1}^{3}||g_{j+1}^{(i)}||_{\infty}|\epsilon_{i}(h_n,Z_{n}(u_{j}))|$. \\
By Assumption \hyperlink{hypA234}{A2},  $\sup_{j\in\{1,\ldots,N_n\}}\sum_{k=1}^{3}||g_{j}^{(k)}||_{\infty}$ is finite and depends only of $v-u$. By denoting it $M_{v-u}$, we have
\[\E(|I_{n,u,v}(f)|)\leq M_{v-u}\sum_{j=0}^{N_n-1}\sum_{i=1}^{3}\Big(\E\left(e_{i,n}(u_{j},h_n,Z_{n}(u_{j}))\right)+\E\left(\left|\epsilon_{i}(h_n,Z_{n}(u_{j}))\right|\right)\Big). \]
Finally,  Assumptions \hyperlink{hypA1}{A1} and \hyperlink{hypA234}{A3} imply that the upper bound converges towards 0 as $n$ tends to $+\infty$.

\newpage
\providecommand{\bysame}{\leavevmode\hbox to3em{\hrulefill}\thinspace}
\providecommand{\MR}[1]{%
  \href{http://www.ams.org/mathscinet-getitem?mr=#1}{MR#1}
}
\providecommand{\href}[2]{#2}

\end{document}